\DeclareMathOperator{\DHyp}{DH}
\DeclareMathOperator{\Code}{Code}
\DeclareMathOperator{\Decode}{Decode}
\DeclareMathOperator{\Cube}{Cube}
\DeclareMathOperator{\Cell}{Cell}
\newcommand{\Index}{{\rm Index}}
\newtheorem{fact}{Fact} 
\newtheorem{claim}{Claim} 
\newtheorem{definition}{Definition} 
\newtheorem{theorem}{Theorem} 
\newtheorem{lemma}{Lemma} 
\title{Limits, regularity and removal for finite structures}
\author{Ashwini Aroskar}
\author{James Cummings}
\begin{document}

\maketitle

\begin{abstract}
Our work builds on known results for k-uniform hypergraphs including the existence of limits, a Regularity Lemma and a Removal Lemma. Our main tool here is a theory of measures on ultraproduct spaces which establishes a correspondence between ultraproduct spaces and Euclidean spaces. First we show the existence of a limit object for convergent sequences of relational structures and as a special case, we retrieve the known limits for graphs and digraphs. Then we extend this notion to finite models of a fixed universal theory. We also state and prove a Regularity Lemma and a Removal Lemma. We will discuss connections between our work and Razborov's flag algebras as well. 
\end{abstract}

\section{Introduction}
The theory of {\em graph limits} was initiated by Lov\'{a}sz and Szegedy \cite{LS04}. The theory associates to each sequence of graphs $(G_n)$, which is increasing and convergent in an appropriate sense, a ``limit object'' in the form of a Lebesgue-measurable function called a {\em graphon}. There are close connections between the theory of graph limits and some well-known structural facts about large finite graphs, such as, the Regularity Lemma and the Removal Lemma. The monograph by Lov\'{a}sz \cite{Lovasz-book} gives a very detailed treatment of graph limits.

Elek and Szegedy \cite{ES08b} developed a theory of measures on ultraproduct spaces and investigated limits, regularity and removal in the context of $k$-uniform hypergraphs. The limit objects are significantly more complicated in this case: a graphon is a function of two variables, but the limit of a sequence of $k$-uniform hypergraphs is a function of $2^k - 2$ variables. In this paper we build on the work of Elek and Szegedy to construct and apply limit objects for sequences of  finite relational structures, and then extend our results to the setting of  finite models of a fixed universal first-order theory $T$. Razborov's work on flag algebras \cite{Raz07} also gives a notion of a limit object for sequences of models of $T$, in the form of a certain kind of $\mathbb R$-algebra homomorphism; we will discuss the connection between our analytic limit objects and Razborov's more  algebraic limits. 
  The paper is organised as follows:
\begin{itemize}
\item  In section \ref{prelim} we cover some necessary background material. The impatient reader should probably skip this section and refer  back as necessary. In subsection \ref{strux} we  discuss relational languages and structures for such languages, define various kinds of structure-preserving  maps between structures, and then define the important notion of a {\em convergent sequence of structures.}

 In subsection \ref{coding} we define a scheme for coding an arbitrary $n$-ary relation by a family of directed hypergraphs, which we will use in section \ref{withoutT}. In subsection \ref{ultrafilters} we discuss the elementary theory of ultrafilters, ultraproducts and ultrapowers. In subsection \ref{ultrabasics} we give the basic facts about
the ``Loeb measure'' \cite{Loeb} on an ultraproduct of finite sets, and finally in subsection \ref{EStheory} we describe some results by Elek and Szegedy about the Loeb measure.

\item Section \ref{withoutT} contains our main results. In subsection \ref{limsec} we show how to associate a limit object to a convergent sequence of relational structures, and prove that we can use the limit object as a ``template'' to construct a random sequence of structures which resembles (in a precise sense) the original sequence. Our proofs use some of Razborov's flag algebra machinery, and we briefly discuss the relationship between our work and the theory of flag algebras.

 In subsection \ref{hypsec} we discuss a version of the technical notion of {\em hyperpartition}, due originally to  Elek and Szegedy \cite{ES08b}. In subsection \ref{removalsection} we prove a form of Strong Removal for finite relational structures, and finally in subsection \ref{regularity} we sketch a version of the Regularity Lemma for such structures. 

\item In section \ref{withT} we extend the theory from section \ref{withoutT} to the more general setting of structures which are required to be models of a fixed universal theory $T$. In particular we prove a version of Strong Removal in this setting. 

\item The appendix has two parts. In part \ref{examples} we work out our limit theory in a very simple case (one binary relation), and then discuss the relationship between our work and the theories of digraph limits (Offner and Pikhurko \cite{Offner}) and poset limits (Janson \cite{Janson}). In part \ref{forbid} we prove a technical result needed for the results of  section \ref{withT}.
\end{itemize}

Our notation is mostly standard; here is a brief review. If $X$ and $I$ are sets then $X^I$ is the set of functions from $I$ to $X$, which we usually think of as $I$-indexed sequences. A typical element of $X^I$ is generally denoted as $\vec x = (x_i : i \in I)$, or just as $(x_i)$ when $I$ is clear from the context.  We write $X^t$ for the set of $t$-tuples from $X$. We denote the set of $t$-tuples in $X^t$ with no repeated entry as $X^{\underline t}$. Note that this is in line with one notation used for the ``descending factorial'' $n^{\underline t} = n!/(n-t)!$, which we will adopt.

 As usual $[m] = \{1, \ldots m \}$. Note that in particular $[m]^t$ is the set of $t$-tuples of elements of $[m]$.

 If $X$ is a set and $R$ is an $n$-ary relation on $X$, then we use the equivalent notations $R(\vec x)$ or ${\vec x} \in R$ as convenient.

 In an effort to improve readability, we have adopted some typographical conventions:

\begin{itemize}
\item Objects related to logic (languages, structures, sets of structures) are denoted by calligraphic letters ($\mathcal L$ for languages, $\mathcal M$ and $\mathcal N$ for structures, $\mathcal F$ for sets of structures).
\item Indexed families of Lebesgue measurable sets (which play the role of limit objects in our theory) are denoted by gothic letters (e.g. $\mathfrak E$, $\mathfrak F$).
\item  Following Razborov \cite{Raz07}, random objects are denoted by boldface symbols, in particular $\boldsymbol{\mathcal N}$ is always a random structure.
\item  Some families of partitions known as {\em hyperpartitions} (see subsection \ref{hypsec}) are denoted by variations of the fancy calligraphic letter $\mathscr{H}$.
\end{itemize} 

\section{Preliminaries}  \label{prelim} 

\subsection{Structures} \label{strux}
   We work in a general setting which covers many combinatorially interesting structures. We will fix a {\em finite relational language} $\mathcal L$, which is specified by giving finitely many relation symbols $R_1, \ldots, R_n$ together with a natural number $r_i > 0$ for each symbol $R_i$; $r_i$ is called the {\em arity of $R_i$}. An {\em $\mathcal L$-structure} $\mathcal M$ is a set $X$  together with $r_i$-ary relations  $R_i^{\mathcal M} \subseteq X^{r_i}$, for $1 \le i \le n$. We call $X$ the {\em underlying set} of the structure $\mathcal M$ and write $X = \vert \mathcal M \vert$, so that the cardinality of the underlying set for $\mathcal M$ is denoted by $\Vert {\mathcal M} \Vert$. 
 When $\mathcal M$ is a structure and $A \subseteq \vert {\mathcal M} \vert$, we can define a new structure  with underlying set $A$ by interpreting each relation symbol $R_i$
 as $R_i^{\mathcal M} \cap A^{r_i}$, obtaining a substructure ${\mathcal M} \vert_A$. 
 
 The formulae of the relational language are built up from the given relation symbols together with a special binary relation symbol $=$,  variable symbols, connectives and quantifiers in the usual way. When we are interpreting formulae of our language, the symbol $=$ will always be interpreted as equality. Given a formula $\phi$, a structure $\mathcal M$ and elements $m_1, \ldots m_s \in \vert \mathcal M \vert$, we follow standard logical usage and write ``${\mathcal M} \models \phi(m_1, \ldots, m_s)$'' for ``$\phi$ is true of the elements $m_1, \ldots, m_s$ in the structure $\mathcal M$''.

   A {\em sentence} is a formula in which every variable is in the scope of some quantifier, and which therefore has a definite truth value in every structure. A {\em theory} is a set of sentences. A {\em model} of a theory $T$ is a structure in which every sentence from $T$ is true. A formula $\phi$ is {\em universal} if $\phi$ has the form $\forall x_1 \ldots \forall x_s \; \psi$ where $\psi$ contains no quantifiers, and a {\em universal theory} is a theory consisting of universal sentences.

 We will work in the context of a countable universal theory $T$, and we will assume that $T$ has arbitrarily large finite models. By the well-known compactness theorem from first order logic, this is equivalent to the assertion that $T$ has an infinite model.  Since $T$ is universal, whenever ${\mathcal M}$ is a model of $T$ and $A \subseteq \vert {\mathcal M} \vert$ we have that the ``induced'' substructure ${\mathcal M} \vert_A$ is also a model of $T$. 

In the following sections we will first develop a version of the theory in the setting when $T = \emptyset$, that is a theory of ``pure relational structures''. We will then 
extend the theory to models of $T$ using an analysis of models of $T$ in terms of ``forbidden induced substructures'', which we describe in section \ref{withT}.

   Given structures $\mathcal M$ and $\mathcal N$:
\begin{itemize}

\item  A {\em homomorphism from $\mathcal M$ to $\mathcal N$} is a function $f: \vert {\mathcal M} \vert \rightarrow \vert {\mathcal N} \vert$ such that
\[
 R_i^{\mathcal M}(m_1, \ldots, m_{r_i}) \implies R_i^{\mathcal N}(f(m_1), \ldots, f(m_{r_i}))
\]
 for all $i$ with $1 \le i \le n$ and all $r_i$-tuples $(m_1, \ldots, m_{r_i})$ of elements of $\vert {\mathcal M} \vert$.

\item  An {\em embedding of $\mathcal M$ into $\mathcal N$} is an injective function $f: \vert {\mathcal M} \vert \rightarrow \vert {\mathcal N} \vert$ such that
\[
 R_i^{\mathcal M}(m_1, \ldots, m_{r_i}) \iff R_i^{\mathcal N}(f(m_1), \ldots, f(m_{r_i}))
\]
 for all $i$ with $1 \le i \le n$ and all $r_i$-tuples $(m_1, \ldots, m_{r_i})$ of elements of $\vert {\mathcal M} \vert$.

\item   An {\em isomorphism between $\mathcal M$ and $\mathcal N$} is a bijective embedding of ${\mathcal M}$ into ${\mathcal N}$. The structures ${\mathcal M}$ and ${\mathcal N}$ are {\em isomorphic} if and only if there is an isomorphism between them, and in this case we write ${\mathcal M} \simeq {\mathcal N}$. 

\end{itemize} 

 We now define various quantities which measure (in slightly different senses) the ``density of ${\mathcal M}$ in ${\mathcal N}$'' for finite $\mathcal L$-structures $\mathcal M$ and $\mathcal N$. The important quantities for us are $p$ and $t_{\rm ind}$; we include the quantities $t$ and $t_0$, which play more of a role in the classical theory
 of graph and hypergraph limits, for the sake of completeness. 
\begin{definition}  
Let ${\mathcal M}$ and ${\mathcal N}$ be finite $\mathcal L$-structures.
\begin{enumerate}
\item  The {\em induced substructure density} $p({\mathcal M}, {\mathcal N})$ is the probability that ${\mathcal N} \vert_{\boldsymbol A} \simeq M$, where $\boldsymbol A$ is a subset of $\vert {\mathcal N} \vert$ of cardinality $\Vert {\mathcal M} \Vert$ chosen uniformly at random. 
\item  The {\em homomorphism density} $t({\mathcal M}, {\mathcal N})$ is the probability that a function ${\boldsymbol f}: \vert {\mathcal M} \vert \rightarrow \vert {\mathcal N} \vert$ chosen uniformly at random is a homomorphism.
\item The {\em injective homomorphism density} $t_0({\mathcal M}, {\mathcal N})$ is the probability that an injective function ${\boldsymbol f}: \vert {\mathcal M} \vert \rightarrow \vert {\mathcal N} \vert$ chosen uniformly at random is a homomorphism.
\item The {\em embedding density} or {\em induced homomorphism density}  $t_{\rm ind}({\mathcal M}, {\mathcal N})$ is the probability that an injective  function ${\boldsymbol f}: \vert {\mathcal M} \vert \rightarrow \vert {\mathcal N} \vert$ chosen uniformly at random is an embedding.
\end{enumerate}
\end{definition} 

 By convention the quantities $p({\mathcal M}, {\mathcal N})$, $t_0({\mathcal M}, {\mathcal N})$ and $t_{\rm ind}({\mathcal M}, {\mathcal N})$ are zero when $\Vert {\mathcal N} \Vert <  \Vert {\mathcal M} \Vert$. 
\begin{definition}
 A sequence $( {\mathcal N}_k )$ of structures is {\em increasing } if and only if $\Vert {\mathcal N_k} \Vert \rightarrow \infty$, and {\em convergent} if and only if it is increasing and additionally the sequence of induced substructure densities $(p({\mathcal M}, {\mathcal N}_k))$ converges for every finite structure ${\mathcal M}$. 
\end{definition} 
 Since there are only countably many finite ${\mathcal L}$-structures, an easy diagonal argument along the lines of the Bolzano-Weierstrass theorem shows that every increasing sequence has a convergent subsequence. 

   It is easy to see that if $( {\mathcal N}_k )$  is convergent then each of the sequences $(t({\mathcal M}, {\mathcal N}_k))$,   $(t_0({\mathcal M}, {\mathcal N}_k))$,  and  $(t_{\rm ind}({\mathcal M}, {\mathcal N}_k))$ converges for all $\mathcal M$, and their limiting values may be computed from the limiting values for $(p({\mathcal M}, {\mathcal N}_k))$. Given a convergent sequence $( {\mathcal N}_k )$, we will define a function $\Phi_p$ by setting $\Phi_p({\mathcal M}) = \lim_{k \rightarrow \infty} p( {\mathcal M}, {\mathcal N}_k)$: of course the definition depends on the convergent sequence $( {\mathcal N}_k )$, but this should always be clear from the context.
 The functions $\Phi_t$, $\Phi_{t_0}$  and $\Phi_{t_{\rm ind}}$ are defined similarly. 
 It is easy to see that $\vert t({\mathcal M}, {\mathcal N}) - t_0({\mathcal M}, {\mathcal N}) \vert$ is $O ( \Vert \mathcal N \Vert^{-1} )$, so that $\Phi_t = \Phi_{t_0}$.

\begin{fact} \label{ptfact} Let ${\mathcal M}$ and $\mathcal N$ be finite ${\mathcal L}$-structures.
 Then  $t_{\rm ind} ({\mathcal M}, {\mathcal N}) = p({\mathcal M}, {\mathcal N}) t_{\rm ind}({\mathcal M}, {\mathcal M})$.
\end{fact}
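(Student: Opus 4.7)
The plan is a straightforward double-counting argument, with the key observation being that a random injection $f \colon |\mathcal M| \to |\mathcal N|$ can be sampled in two stages: first pick its image, then pick the bijection onto that image.

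First I would dispose of the trivial case. When $\Vert \mathcal N \Vert < \Vert \mathcal M \Vert$ both sides of the claimed identity are zero by convention, so I may assume $n := \Vert \mathcal N \Vert \geq \Vert \mathcal M \Vert =: m$. Write $\mathrm{Aut}(\mathcal M)$ for the automorphism group of $\mathcal M$. Observe that $t_{\mathrm{ind}}(\mathcal M, \mathcal M)$ is, by definition, the probability that a uniformly random bijection $|\mathcal M| \to |\mathcal M|$ is an embedding; since any injection from a finite structure to itself is a bijection and any embedding into itself is an isomorphism, this equals $|\mathrm{Aut}(\mathcal M)|/m!$.

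Next I would compute $t_{\mathrm{ind}}(\mathcal M, \mathcal N)$ by regrouping injections according to their image. For any $m$-subset $A \subseteq |\mathcal N|$, a bijection $|\mathcal M| \to A$ is an embedding of $\mathcal M$ into $\mathcal N$ precisely when it is an isomorphism $\mathcal M \to \mathcal N\vert_A$, so the number of embeddings with image $A$ equals $|\mathrm{Aut}(\mathcal M)|$ if $\mathcal N\vert_A \simeq \mathcal M$ and $0$ otherwise. Summing over $A$ and dividing by the total number $n^{\underline m}$ of injections gives
\[
t_{\mathrm{ind}}(\mathcal M, \mathcal N) = \frac{|\{A \subseteq |\mathcal N| : |A| = m, \, \mathcal N\vert_A \simeq \mathcal M\}| \cdot |\mathrm{Aut}(\mathcal M)|}{n^{\underline m}}.
\]

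Finally, using $|\{A : \mathcal N\vert_A \simeq \mathcal M\}| = p(\mathcal M, \mathcal N) \binom{n}{m}$ and the elementary identity $\binom{n}{m}/n^{\underline m} = 1/m!$, the right-hand side collapses to $p(\mathcal M, \mathcal N) \cdot |\mathrm{Aut}(\mathcal M)|/m! = p(\mathcal M, \mathcal N) \cdot t_{\mathrm{ind}}(\mathcal M, \mathcal M)$, as required. There is no genuine obstacle here; the only thing to watch is bookkeeping of the automorphism group, and in particular the identification of $t_{\mathrm{ind}}(\mathcal M, \mathcal M)$ with $|\mathrm{Aut}(\mathcal M)|/m!$, which is what makes the factorization clean.
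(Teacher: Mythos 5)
Your proof is correct and follows essentially the same route as the paper, which also views a uniformly random injection as a two-step process of first choosing its range and then its values on that range. Your version just makes the counting explicit (via $\vert\mathrm{Aut}(\mathcal M)\vert$ and the identity $\binom{n}{m}/n^{\underline m}=1/m!$) where the paper phrases it probabilistically.
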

\begin{proof}
 Consider the process of choosing a random injection $\boldsymbol f$ from $\vert {\mathcal M} \vert$ to $\vert {\mathcal N} \vert$ as a two-step process, in which we first choose the range of $\boldsymbol f$ and then choose its values. For this process to yield an embedding, the first step must yield a set inducing a  substructure isomorphic to $\mathcal M$, and the second step must yield an isomorphism between $\mathcal M$ and this isomorphic copy of $\mathcal M$.
\end{proof}
  
 It follows that $\Phi_{t_{\rm ind}}( {\mathcal M} ) =  \Phi_p( {\mathcal M}) t_{\rm ind}({\mathcal M},{\mathcal M})$. The quantity $t_{\rm ind}({\mathcal M}, {\mathcal M})$ is the probability that a random permutation of $\vert {\mathcal M} \vert$ is an automorphism of $\mathcal M$.  

\subsection{Decomposing $t$-ary relations into uniform directed hypergraphs} \label{coding}
    For technical reasons we will find it helpful to break up  $t$-ary relations into simpler pieces. To see why this is so, consider the language with one binary relation $R$ and let ${\mathcal M}_n$ be a structure with $n$ elements such that $R(x, y) \iff x = y$. Clearly $( {\mathcal M}_n )$  is a convergent sequence, and the probability that two randomly chosen elements are related tends to zero as $n \rightarrow \infty$; but this ``two dimensional'' information is not enough to determine the limiting behaviour of $(p({\mathcal M}, {\mathcal M}_n))$ for all finite structures $\mathcal M$. We also need to know the ``one dimensional information'' that the probability that a  single randomly chosen element is related to itself tends to one.

    Recall that an {\em $r$-uniform directed hypergraph} is a pair $(X, E)$ where $X$ is a set and $E \subseteq X^{\underline r}$. Given a $t$-ary relation $R$ on $X$, we will associate to $R$ a family of directed hypergraphs which code it. Let ${\rm Part}_t$ be the set of partitions of $[t]$, and for each partition $p \in {\rm Part}_t$ let $\Vert p \Vert$ be the number of classes in $p$. If $\Vert p \Vert = r$ then we will enumerate the $r$ classes of $p$ as $C^p_1, \ldots, C^p_r$ where $\min(C^p_i)$ increases with $i$.

  Given a $t$-tuple $\vec x \in X^t$, we let $p(\vec x) \in {\rm Part}_{[t]}$ be the partition in which $j$ and $j'$ lie in the same class if and only if $x_j = x_{j'}$. If $\Vert p(\vec x) \Vert = r$, then we let $C(\vec x)$ be the $r$-tuple obtained from $\vec x$ by deleting elements from $\vec x$ as follows: we retain the first occurrence of each element of $\{ x_i : 1 \le i \le t \}$, but delete all subsequent occurrences. Given a $t$-ary relation $R \subseteq X^t$, we let $R_p = \{ \vec x \in R : p(\vec x) = p \}$ and let $\DHyp^R_p = \{ C(\vec x) : \vec x \in R_p \}$ for each $p \in {\rm Part}_t$. For example if $R$ is a $3$-ary relation then $R$ will be coded by the directed hypergraphs with edge sets:

\begin{itemize}

\item $\DHyp^R_{ \{ \{ 1, 2, 3 \} \} }  = \{ x \in X : R(x, x, x) \}$.

\item $\DHyp^R_{ \{ \{ 1, 2 \}, \{ 3 \} \} }  = \{ (x, y) \in X^{\underline 2}: R(x, x, y) \}$.

\item $\DHyp^R_{ \{ \{ 1, 3 \}, \{ 2 \} \} } = \{ (x, y) \in X^{\underline 2}: R(x, y, x) \}$. 

\item $\DHyp^R_{ \{ \{ 1 \}, \{ 2,  3 \} \} } = \{ (x, y) \in X^{\underline 2}: R(x, y, y) \}$.

\item $\DHyp^R_{ \{ \{ 1 \}, \{  2 \}, \{ 3 \} \} } = \{ (x, y, z) \in X^{\underline 3}: R(x, y, z) \}$. 

\end{itemize} 
  
  When $\mathcal N$ is an $\mathcal L$-structure, we write $\DHyp^{ {\mathcal N}, i}_p$ for $\DHyp^{ {R_i}^{\mathcal N} }_p$. The structure  ${\mathcal N}$ with underlying set $X$ can be coded by the family $\Code({\mathcal M}) = (\DHyp^{ {\mathcal N}, i}_p : i \in [n], p \in {\rm Part}_{[r_i]})$. For notational convenience later we define $\Index$ to be the set of pairs $(i, p)$ with $i \in [n]$ and $p \in {\rm Part}_{[r_i]}$, which forms the index set for $\Code({\mathcal M})$. 
  If $\DHyp = (\DHyp^i_p : (i, p) \in \Index )$ is a family of directed hypergraphs of the appropriate types on some set $X$, then we write $\Decode(\DHyp)$ for the unique structure on $X$ coded by $\DHyp$.    
  
\subsection{Ultrafilters and ultraproducts} \label{ultrafilters}
   A {\em filter on $\mathbb N$}  is a family $F$ of subsets of $\mathbb N$ such that $\emptyset \notin F$, ${\mathbb N} \in F$, $F$ is upwards closed (in the sense that $A \in F$ and $A \subseteq B \subseteq {\mathbb N}$ implies $B \in F$), and finally $F$ is closed under finite intersections. An {\em ultrafilter on $\mathbb N$} is a filter $U$ that is maximal under inclusion, equivalently a filter such that for every set $A$, exactly one of the sets $A$ and ${\mathbb N} \setminus A$ lies in $U$.  It is easy to see that $U$ is an ultrafilter if and only if $U$ is of the form $\{ A : \mu(A) = 1 \}$, for some finitely additive probability measure $\mu$ that is defined on all subsets of $\mathbb N$ and  takes only the values $0$ and $1$.  If an ultrafilter $U$ contains a finite set then it has the form $\{ A : n \in A \}$ for some $n$ and is said to be {\em principal}: an easy application of Zorn's lemma shows that any infinite set lies in a non-principal ultrafilter.
   For the rest of this paper we assume that we have fixed a non-principal ultrafilter $U$ on $\mathbb N$. Given a sequence of $\mathcal L$-structures $( {\mathcal M}_k )_{k \in {\mathbb N}}$, a standard construction in logic is the formation of a new $\mathcal L$-structure called the {\em ultraproduct}.
 The standard notation for this structure is $\prod_k  {\mathcal M}_k/U$; in this discussion we will call it ${\mathcal M}_\infty$. In the special case when ${\mathcal M}_k = {\mathcal M}$ for all $k$, we write ${\mathcal M}^{\mathbb N}/U$ for the corresponding ultraproduct which is called the {\em ultrapower of $\mathcal M$ by $U$.} 
 To define ${\mathcal M}_\infty$  we consider sequences $(x_k) \in \prod_k \vert {\mathcal M}_k \vert$, introduce an equivalence relation 
\[   
 (x_k) =_U (y_k) \iff \{ k : x_k = y_k \} \in U,
\]
 and let the underlying set $\vert {\mathcal M}_\infty \vert$ be the set of equivalence classes. We denote the equivalence class of $(x_k)$ by $[x_k]$ and interpret the language
 as follows: for each relation symbol $R$, if $R$ has arity $r$ then
\[
  R^{{\mathcal M}_\infty}([x^1_k], \ldots, [x^r_k]) \iff \{ k : R^{{\mathcal M}_k}(x^1_k, \ldots, x^r_k) \} \in U.
\]
  The crucial fact about ultraproducts is  {\L}o{\'s}'  Theorem: for {\em any} first-order formula $\phi$,
\[
  {\mathcal M}_\infty \models \phi([x^1_k], \ldots, [x^r_k]) \iff \{ k : {\mathcal M}_k \models \phi(x^1_k, \ldots, x^r_k) \} \in U.
\]
This is proved by a straightforward induction on the structure of the formula $\phi$, with the interpretation of the relation symbols as the base case.  We note the easy corollary that for any theory $T$, the class of models of $T$ is closed under ultraproducts.       
   For our purposes it will be very convenient to use ideas from set theory. We recall that the {\em language of set theory} has one binary relation symbol $\in$ denoting membership. ZFC set theory is a first-order theory written in this language, whose intended model is the class $V$ of all sets; we can view almost all mathematical objects as sets, and can justify almost all mathematical constructions and proofs on the basis of ZFC. We will form the ultrapower \footnote{Formally speaking we should be careful here, because each equivalence class (mod $U$) of functions is a proper class and it would be improper to form the collection of equivalence classes. This issue can be dealt with by an argument due to Dana Scott \cite{Scott} in which each equivalence class is replaced by the set of elements within it of minimal rank.} $V^* = V^{\mathbb N}/U$.

 By the preceding discussion, $V^*$ is itself a model of ZFC  and contains versions of almost all standard mathematical objects. If $\in^*$ is $V^*$'s version of the membership relation, then we may view each element $b$ of $V^*$ as a set in its own right whose elements are those $a  \in V^*$ such that $V^* \models a \in^* b$; with this in mind we will refer to such objects $a$ as the {\em elements of $b$} and write ``$a \in b$'' rather than ``$V^* \models a \in^* b$''. In particular we may view the ultraproduct ${\mathcal M}_\infty$ as $[{\mathcal M}_i]$,  a structure lying in $V^*$.  

 Given any set $x \in V$, we let $c_x$ be the sequence which is constant with value $x$ and then $x^*$ be the element of $V^*$ represented by $c_x$; it follows immediately from the definitions and {\L}o{\'s}' Theorem that $x \mapsto x^*$ is an injective map and
\[
 V \models \phi(x_1, \ldots, x_n) \iff V^* \models \phi(x_1^*, \ldots, x_n^*)
\]
 for all $\phi$. In the terminology of logic we have an ``elementary embedding from $V$ to $V^*$''.

 If $X$ is an infinite set then $X^*$ contains elements which are not of the form $x^*$ for any $x \in X$. What is more, if $Y$ is the power set of $X$ then by elementarity, $Y^*$ is the set of subsets of $X^*$ which lie in $V^*$; when $X$ is infinite it turns out that $Y^*$ has elements which are not of the form $A^*$ for any $A \in Y$, and in addition there exist subsets of $X^*$ which are not in $Y^*$.
   To clarify these points, consider the example $X = {\mathbb N}$. It is not hard to see that $\{ n^* : n \in {\mathbb N} \}$ is an initial segment of ${\mathbb N}^*$ which is isomorphic to $\mathbb N$, and that setting $x_k = k$ we obtain an element $[x_k]$ of ${\mathbb N}^*$ which is greater than $n^*$ for all $n$. If we now let $y_k = \{ 0, 1, \ldots, k \}$ then by {\L}o{\'s}' theorem $V^* \models \mbox{``$[y_k]$ is finite''}$, and it follows that $[y_k]$ is a subset of ${\mathbb N}^*$ which lies in $V^*$ but is not of the form $A^*$ for any $A \subseteq {\mathbb N}$. 
 Additionally we claim that the set $\{ n^* : n \in {\mathbb N} \}$ is not in $V^*$; for if it were, then by elementarity its complement would have a minimal element $y$ say,  but by elementarity again $y$ would have a predecessor $n^*$ leading to an immediate contradiction.

 The structure $V^* = V^{\mathbb N}/U$ has a compactness property known as {\em $\aleph_1$-saturation}. This states that if $\{ \phi_i(x) : i \in {\mathbb N} \}$ is a set of formulae, and for every $j$ there is $a_j \in V^*$ such that $\phi_i(a_j)$ holds in $V^*$ for all $i \le j$, then there is $a \in V^*$ such that $\phi_i(a)$ holds in $V^*$ for 
 all $i$. The argument is a routine diagonalisation using {\L}o{\'s}' theorem.

 Let $X$ be a compact Hausdorff space and let $y \in X^*$. A routine argument shows that there is a unique $x \in X$ such that $y \in O^*$ for every open $O \ni x$; we call this
point $x$ the {\em standard part} of $y$ and write $x = std(y)$. A more topological view of this construction can be obtained by observing that if $y = [y_k]$ then $x$ is the unique point such that $\lim_{k \rightarrow U} y_k = x$, that is  for all open $O \ni x$ we have $\{ k : y_k \in O \} \in U$.

\subsection{Measure theory on ultraproducts: basic facts} \label{ultrabasics}
   Measures on ultraproducts of finite sets are a special case of {\em Loeb measures} \cite{Loeb}, which are well-known objects in non-standard analysis. We review some basic facts.    
   Let $(X_k)$ be a sequence of finite sets with $\vert X_k \vert \rightarrow \infty$, and let $X_\infty = [X_k]$, so that 
\[
  V^* \models \mbox{``$X_\infty$ is finite''}.
\]
 In actuality $X_\infty$ is easily seen to be an uncountable set. 
 Our goal is to define a measure on $X_\infty$. We will start by assigning a measure to those subsets of $X_\infty$ which lie in $V^*$.  
   Let $A_\infty \in V^*$ be a subset of $X_\infty$. We begin by working in $V^*$ and computing the measure of $A_\infty$ with respect to the ``normalised counting measure'' on $X_\infty$, that is $\frac{\vert A _\infty \vert}{\vert X_\infty \vert} \in [0,1]^*$. Since $[0, 1]$ is compact this number has a standard part which we call $\mu(A_\infty)$. 
 For a more concrete definition, we may choose $A_k \subseteq X_k$ such that $A_\infty = [A_k]$ and then check that $\mu(A_\infty) = \lim_{k \rightarrow U} \frac{\vert A_k \vert}{\vert X_k \vert}$.   
 
 The subsets of $X_\infty$ which lie in $V^*$ form a Boolean algebra. To extend $\mu$ to a wider class of sets, we say that an arbitrary set $A \subseteq X_\infty$ is {\em null} if and only if for every $\epsilon > 0$ there is $B \in V^*$ such that $A \subseteq B \subseteq X_\infty$ and $\mu(B) < \epsilon$. We let $\sigma$ be the class of subsets $A \subseteq X_\infty$ such that $A \Delta B$ is null for some $B \in V^*$, and extend $\mu$ onto $\sigma$ by defining $\mu(A) = \mu(B)$ for some (any) $B \in V^*$ such that $A \Delta B$ is null.
 The following fact is a consequence of the $\aleph_1$-saturation of $V^*$. 
\begin{fact} 
$\sigma$ is a $\sigma$-algebra and $\mu$ is a $\sigma$-additive probability measure. 
\end{fact}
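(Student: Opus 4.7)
The plan is to verify $\mu$ is a finitely additive probability measure on the Boolean algebra $\mathcal{A}$ of internal subsets of $X_\infty$ (those lying in $V^*$), upgrade this to countable additivity on $\mathcal{A}$ using $\aleph_1$-saturation, and then check directly that $\sigma$ is a $\sigma$-algebra with $\mu$ $\sigma$-additive on it. Finite additivity on $\mathcal{A}$ is immediate from {\L}o{\'s}' theorem (internal counting is internally finitely additive, so $\vert A \sqcup B \vert = \vert A \vert + \vert B \vert$ in $V^*$) together with continuity of the standard-part map on $[0,1]^*$.

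The heart of the argument is the following \emph{continuity from above} on $\mathcal{A}$: if $(B_n)$ is a decreasing sequence of internal sets with $\bigcap_n B_n = \emptyset$, then $\mu(B_n) \to 0$. Indeed, if $\mu(B_n) \geq \epsilon > 0$ for every $n$ then each $B_n$ is nonempty, and since $(B_n)$ is decreasing the countable family of formulae $\phi_n(x) \equiv x \in B_n$ is finitely satisfiable in $V^*$; by $\aleph_1$-saturation there is $x \in X_\infty$ belonging to every $B_n$, contradicting $\bigcap_n B_n = \emptyset$. Applying this to $B_n = A \setminus \bigsqcup_{i \leq n} A_i$ upgrades finite additivity on $\mathcal{A}$ to $\sigma$-additivity on $\mathcal{A}$ whenever the disjoint countable union itself happens to lie in $\mathcal{A}$.

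To pass to $\sigma$, closure under complement is trivial since $A \Delta B = (X_\infty \setminus A) \Delta (X_\infty \setminus B)$. For countable unions, given $A_n \in \sigma$ with internal approximations $B_n$, it suffices to show $\bigcup_n B_n \in \sigma$. Let $C_N = B_1 \cup \ldots \cup B_N$ and $L = \sup_N \mu(C_N)$. By $\aleph_1$-saturation the standard sequence $(B_n)$ extends to an internal sequence indexed by $\mathbb{N}^*$, so $C_N$ makes sense and is internal for every $N \in \mathbb{N}^*$. For each standard $k$ the condition $\vert C_N \vert / \vert X_\infty \vert < L + 1/k$ together with $N > k$ is satisfied by every sufficiently large standard $N$; saturation applied to this countable family yields a single $N^* \in \mathbb{N}^*$ satisfying all the conditions simultaneously. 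Then $C_{N^*}$ is internal, contains every $B_n$ for standard $n$, and for each standard $N$ we have $C_{N^*} \setminus \bigcup_n B_n \subseteq C_{N^*} \setminus C_N$, exhibiting $C_{N^*} \setminus \bigcup_n B_n$ as a subset of an internal set of measure $L - \mu(C_N)$, which can be made arbitrarily small. Hence $\bigcup_n B_n \Delta C_{N^*}$ is null and $\bigcup_n B_n \in \sigma$, so $\bigcup_n A_n \in \sigma$ after absorbing the null symmetric differences $A_n \Delta B_n$.

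Finally, $\sigma$-additivity of $\mu$ on $\sigma$ reduces, via the null symmetric differences appearing in the definition of $\sigma$, to the continuity-from-above result on $\mathcal{A}$ already established. The main obstacle is the first saturation step; everything else is routine measure-theoretic bookkeeping combined with the second, similar application of saturation used to approximate countable unions of internal sets by a single internal set.
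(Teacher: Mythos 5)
Your proof is correct and follows exactly the route the paper indicates: the paper states this fact without proof, citing Loeb and attributing it to the $\aleph_1$-saturation of $V^*$, and your argument is the standard Loeb-measure construction (continuity from above for internal sets via saturation, plus the overspill-style approximation of $\bigcup_n B_n$ by a single internal $C_{N^*}$). The one step you elide --- that a countable union of null sets is null, which is needed to absorb the symmetric differences $A_n \Delta B_n$ and for well-definedness of $\mu$ on $\sigma$ --- is filled by the same saturation argument you already give in detail, so it is not a genuine gap.
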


\subsection{Measure theory on ultraproducts: results of Elek and Szegedy} \label{EStheory}
   Elek and Szegedy \cite{ES08b} proved a family of theorems about measure theory on ultraproducts which will play a central role in our results. We give a brief account of the results we need. 

Let $A \subseteq [t]$ with $\vert A \vert = s$, let $A$ be enumerated in increasing order as $i_1, \ldots i_s$, and  let $\pi^t_A$ denote the map on the class of $t$-tuples which returns for each $t$-tuple $(a_i :i \in [t])$ the $s$-tuple $(a_{i_j} : j \in [s])$.  Then $\sigma^t_A = \{ {\pi^t_A}^{-1}[B]: B \in \sigma_{[s]} \}$, that is to say elements of $\sigma^t_A$ are measurable sets which depend only on coordinates in $A$. It is easy to see that $\pi^t_A$ sets up an isomorphism between $(X_\infty^t, \sigma^t_A, \mu_{[t]} \restriction \sigma^t_A)$ and $(X_\infty^s, \sigma_{[s]}, \mu_{[s]})$.  To lighten the notation, we will often drop the superscript $t$ when it is clear from the context.   
  The following result is a version of a well-known theorem of Maharam \cite{Maharam}.

\begin{fact}   Let $(X, \tau, \nu)$ be a separable atomless complete measure space with $\nu(X) =1$.
  Then the associated measure algebra is isomorphic to the measure algebra of the unit interval with Lebesgue measure. Moreover there is a map $f: X \rightarrow [0,1]$ defining an isomorphism between these measure algebras, that is:

\begin{itemize}

\item  For every measurable $B \subseteq [0,1]$, $f^{-1}[B] \in \tau$ and $\lambda(B) = \mu(f^{-1}[B])$.

\item  For every $A \in \tau$ there is a measurable set $B \subseteq [0,1]$ such that $A$ and $f^{-1}[B]$ differ by a null set.

\end{itemize} 

\end{fact}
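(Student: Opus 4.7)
The plan is to recognize this as the classical Maharam theorem for separable atomless probability algebras, together with a point realization of the resulting measure-algebra isomorphism. I would first reduce to the algebra statement, and then build an explicit pointwise representative using a dyadic tree of measurable sets.

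First I would fix, using separability, a countable family $(A_n)_{n \ge 1}$ of measurable sets whose equivalence classes are dense in $\tau$ modulo null sets under the pseudometric $d(A,B) = \nu(A \Delta B)$. Then I would recursively construct a binary tree $(B_s)_{s \in 2^{<\omega}}$ of measurable sets with $B_\emptyset = X$, a disjoint decomposition $B_s = B_{s{\smallfrown}0} \sqcup B_{s{\smallfrown}1}$ at every node, and $\nu(B_{s{\smallfrown}0}) = \nu(B_{s{\smallfrown}1}) = \nu(B_s)/2$; atomlessness is exactly what guarantees such a halving split exists at each stage. In parallel, at level $n$ I would refine the splits so that each $A_n$ lies (modulo null) in the finite algebra generated by $\{B_s : |s| = n\}$, which is possible by splitting each $B_s$ of that level across $A_n$ in equal halves. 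The resulting algebra $\mathcal B = \bigcup_n \sigma(B_s : |s| \le n)$ is then dense in $\tau$ modulo null.

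Next I would define $f \colon X \to [0,1]$ by sending $x$ to the real number whose binary expansion is the unique path $s \in 2^{\omega}$ with $x \in B_{s \restriction n}$ for every $n$, i.e.\ $f(x) = \sum_{n \ge 1} 2^{-n} \mathbf{1}_{\{x \in B_{s \restriction (n-1){\smallfrown}1}\}}$. By the halving property, the preimage under $f$ of a dyadic interval $[k 2^{-n},(k+1)2^{-n})$ is exactly one of the sets $B_s$ with $|s| = n$, so $\nu(f^{-1}[I]) = \lambda(I)$ on dyadic intervals. Since dyadic intervals generate the Borel $\sigma$-algebra and form a $\pi$-system, a monotone class or Dynkin argument gives $\nu(f^{-1}[B]) = \lambda(B)$ for every Borel $B \subseteq [0,1]$, and $f$ is measurable.

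Finally, given any $A \in \tau$, the density of $\mathcal B$ provides, for each $\epsilon > 0$, a set $A_\epsilon \in \mathcal B$ with $\nu(A \Delta A_\epsilon) < \epsilon$; each $A_\epsilon$ is of the form $f^{-1}[B_\epsilon]$ for a Borel $B_\epsilon \subseteq [0,1]$. Taking $\epsilon_n \to 0$ along a summable sequence and using Borel--Cantelli in the measure algebra produces a Borel $B$ with $A \Delta f^{-1}[B]$ null, and completeness of $\tau$ absorbs the null discrepancy. The main obstacle I expect is not the tree construction itself but arranging the interleaving at step $n$ so that the approximants to $A_n$ really are represented (modulo null) in the level-$n$ algebra; this forces the splits to be made along $A_n$ rather than arbitrarily, and one has to verify that this constraint is compatible with the halving requirement, which again reduces to atomlessness applied separately to $B_s \cap A_n$ and $B_s \setminus A_n$.
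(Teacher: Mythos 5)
The paper does not actually prove this statement: it is labelled a Fact, attributed to Maharam, and used as a black box, so there is no in-paper argument to compare against. Your sketch is the standard point-realization proof of the separable atomless case, and its overall architecture (dense countable family, refining tree, binary/interval coding, $\pi$-system argument, then density plus completeness for arbitrary $A\in\tau$) is the right one.

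There is, however, a genuine problem in the step you yourself flag as the main obstacle, and your proposed resolution does not repair it. You cannot simultaneously demand that every level-$n$ cell have measure exactly $2^{-n}$ and that $A_n$ lie, modulo null, in the algebra generated by the level-$n$ cells: that would force $\nu(A_n)$ to be a dyadic rational, which it need not be. Applying atomlessness separately to $B_s\cap A_n$ and $B_s\setminus A_n$ gives four pieces whose measures are $\nu(B_s\cap A_n)/2$ and $\nu(B_s\setminus A_n)/2$; the only way to regroup these into two cells of measure $\nu(B_s)/2$ is to mix a piece of $A_n$ with a piece of its complement, after which $A_n$ is no longer a union of cells. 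Two standard repairs: (i) abandon exact halving, take the level-$n$ cells to be the atoms of the finite algebra generated by $A_1,\dots,A_n$ (further subdivided so that cell measures tend to $0$ along every branch, which needs a separate argument from atomlessness and density), and send each cell to a subinterval of $[0,1]$ of length equal to its measure, nested consistently with the tree; or (ii) keep the halving but require only that $A_n$ be approximated to within $2^{-n}$ by the level-$m_n$ algebra for a suitable $m_n$, choosing the equal-measure splits adapted to $A_n$ so that all but a set of measure at most $2^{-n}$ of $A_n$ is a union of cells. Either version completes your argument; the Borel--Cantelli step at the end is then fine. One last small point: the conclusion asks for $f^{-1}[B]\in\tau$ for every Lebesgue measurable $B$, not just Borel $B$; this is where completeness of $\tau$ is really needed (preimages of subsets of Borel null sets), not merely for absorbing the final null discrepancy.
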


For a non-empty set $A \subseteq [t]$, we define $\sigma_A^*$ to be the subalgebra of $\sigma_A$ generated by the union of the algebras $\sigma_B$ for $B \subseteq A$, $\vert B \vert = \vert A \vert -1$. It is a surprising fact that $\sigma_A$ is much larger than $\sigma_A^*$; this plays a crucial role in everything that follows.

   Let $A \subseteq [t]$ be non-empty and let $0 < s \le \vert A \vert$, then we define $r(A, s)$ to be the set of subsets $B \subseteq A$ with $0 < \vert B \vert \le s$. As a special case $r(A) = r(A, \vert A \vert)$, that is the set of non-empty subsets of $A$. The group $S_{[t]}$ of permutations of $[t]$ acts in the natural way on subsets of $[t]$  by permuting their entries, and on $t$-tuples by permuting their coordinates, and we write $z^\sigma$ for the image of $z$ by $\sigma$; we get an induced action on sets $Z$ of $t$-tuples by defining $Z^\sigma = \{ z^\sigma : z \in Z \}$, and so on inductively for sets of sets of $t$-tuples etcetera.
  When $A \subseteq [t]$ we write $S_A$ for the group of permutations in $S_{[t]}$ which fix points outside $A$; we will often identify this with the permutation group of $A$ in the obvious way. 
\begin{definition} A {\em separable system} on $X_\infty^t$ is a family $\{ l_A: A \in r([t]) \}$ such that

\begin{enumerate}

\item $l_A$ is an atomless separable subalgebra of $\sigma_A$, which is independent of $\sigma_A^*$ (that is to say sets in $l_A$ are independent of $\sigma_A^*$).

\item $(l_A)^\pi = l_{A^\pi}$ for all $A$.

\item $Y^\pi = Y$ for all $Y \in l_A$ and $\pi \in S_A$. 

\end{enumerate}

\end{definition} 
   To illustrate the symmetry and independence properties consider the case when $t =2$, enumerating $r([2])$ as $\{ 1 \}$, $\{ 2 \}$, $\{ 1, 2 \}$; the separable system consists of algebras $l_{\{1\}}, l_{\{2\}}, l_{\{1,2\}}$ where exchanging coordinates $1$ and $2$ exchanges the algebras $l_{\{1\}}$ and  $l_{\{2\}}$, while every set in $l_{\{1,2\}}$ is symmetric under this exchange and is independent of rectangles.

   It is a key fact that arbitrary measurable sets can be well-approximated by separable systems:
\begin{fact} \label{systemfact} Let  $\langle X^B_n : n \in {\mathbb N}, B \in r([t]) \rangle$ be such that $X^B_n \in \sigma_B$ for all $n$ and $B$. Then there exists a separable system $\{ l_A: A \in r([t]) \}$ such that, for every $n$, $X^B_n$ differs on a set of measure zero from a set in the $\sigma$-algebra generated by $\bigcup_{A \subseteq B} l_A$. 
\end{fact}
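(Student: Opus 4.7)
The plan is a bottom-up induction on $d = |A|$, with a preliminary bookkeeping step that enlarges the input family so that level-$d$ conditional expectations are pre-captured by the lower-level algebras. Since $S_{[t]}$ acts transitively on $d$-element subsets of $[t]$, I will build $l_{A_0}$ only for the single representative $A_0 = [d]$ and set $l_A := (l_{A_0})^\pi$ for any $\pi \in S_{[t]}$ with $\pi(A_0) = A$; well-definedness of this formula is precisely the $S_{A_0}$-invariance condition (3), and (2) then holds by construction.

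The bookkeeping: for each pair $(B,n)$ with $|B| > 1$, I throw in countably many $\sigma_B^*$-measurable sets whose $\sigma$-algebra generates, mod null, the conditional expectation $E[\mathbf{1}_{X^B_n} \mid \sigma_B^*]$. Since each such set is measurable with respect to some $\sigma_{B'}$ for $B' \subsetneq B$, iterating the enlargement downward through all levels keeps the family countable. In the base case $d = 1$, both $\sigma_{A_0}^*$ and $S_{A_0}$ are trivial, so I take $l_{A_0}$ to be any separable atomless subalgebra of $\sigma_{A_0}$ containing, mod null, every enlarged demand at level $1$.

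For the inductive step $d \to d+1$, set $A_0 = [d+1]$ and let $\mathcal{L}_{\mathrm{low}} := \sigma\bigl(\bigcup_{B \subsetneq A_0} l_B\bigr)$; by bookkeeping, $\mathcal{L}_{\mathrm{low}}$ absorbs each $E[\mathbf{1}_{X^{A_0}_n} \mid \sigma_{A_0}^*]$ mod null. For each $n$, I apply Fact~3 (Maharam) to a separable atomless sub-measure-algebra of $\sigma_{A_0}$ containing $\mathbf{1}_{X^{A_0}_n}$ together with a separable representative of $\sigma_{A_0}^*$, identify it with a product over $\sigma_{A_0}^*$, and extract a set $Y_n$ independent of $\sigma_{A_0}^*$ with $X^{A_0}_n \in \sigma(\sigma_{A_0}^* \cup \{Y_n\})$ mod null. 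Coordinating the extraction so that every $Y_n$ lives inside one fixed separable independent complement of $\sigma_{A_0}^*$ gives joint, not merely pairwise, independence. Then symmetrize: replace $Y_n$ by an $S_{A_0}$-invariant set built from its orbit $\{Y_n^\pi : \pi \in S_{A_0}\}$, using permutation-invariance of the Loeb measure to preserve independence. Let $l_{A_0}$ be the separable atomless $\sigma$-algebra generated by the symmetrized $Y_n$, and propagate via (2).

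The main obstacle is producing the independent complement $Y_n$: this needs $\sigma_{A_0}$ to be strictly and substantially larger than $\sigma_{A_0}^*$ — equivalently, the conditional measure $\mu(\cdot \mid \sigma_{A_0}^*)$ to be atomless — which is a genuine, non-trivial property of Loeb measure on ultraproducts of finite sets. Given this, Fact~3 converts existence of $Y_n$ into a clean product-algebra argument. The secondary challenge is the bookkeeping: enforcing joint independence as $\{Y_n\}_n$ grows, performing $S_{A_0}$-symmetrization without destroying independence, and maintaining $S_{[t]}$-equivariance across orbits — delicate, but mechanical once the main decomposition is in hand.
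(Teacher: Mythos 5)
The paper itself offers no proof of this Fact: it is imported verbatim from Elek and Szegedy \cite{ES08b} as background, so there is no in-paper argument to compare yours against. Judged on its own, your outline has the right architecture --- induction on $\vert A\vert$, a relative product decomposition of $\sigma_{A_0}$ over $\sigma_{A_0}^*$, symmetrization, $S_{[t]}$-equivariant propagation from one representative per orbit --- but three steps do not work as written.

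First, the ingredient you flag as ``the main obstacle'' is in fact the entire theorem: that $\sigma_{A_0}$ admits a separable atomless subalgebra independent of $\sigma_{A_0}^*$ which, together with $\sigma_{A_0}^*$, captures a prescribed $X^{A_0}_n$ (equivalently, that $\mu(\cdot\mid\sigma_{A_0}^*)$ is atomless on $\sigma_{A_0}$) is the hard combinatorial theorem Elek and Szegedy prove about the Loeb measure on ultraproducts of finite sets; it does not follow from Maharam's theorem, which only converts this relative atomlessness into a product structure once it is established. A proposal that assumes it has assumed the substance. Second, a single set $Y_n$ independent of $\sigma_{A_0}^*$ cannot capture $X^{A_0}_n$: every member of $\sigma(\sigma_{A_0}^*\cup\{Y_n\})$ has the form $(A_1\cap Y_n)\cup(A_2\cap Y_n^c)$ with $A_1,A_2\in\sigma_{A_0}^*$, so its conditional measure given $\sigma_{A_0}^*$ takes values only in $\{0,\,c,\,1-c,\,1\}$ where $c=\mu(Y_n)$, whereas $\mu(X^{A_0}_n\mid\sigma_{A_0}^*)$ is a general $[0,1]$-valued function. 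You need a whole separable independent subalgebra per level, not one set per input; this is repairable but the statement as written is false.

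Third, and most seriously among the steps you call mechanical, the symmetrization is broken. Replacing $Y_n$ by an $S_{A_0}$-invariant set built from its orbit destroys exactly the information needed to recover a non-symmetric $X^{A_0}_n$, and the invariant combinations (unions or intersections over the orbit) are in general no longer independent of $\sigma_{A_0}^*$, since independence is not preserved under such Boolean operations. Note that the Fact demands that an arbitrary, possibly asymmetric, $X^{A_0}_n$ lie mod null in the algebra generated by the $l_A$, while every top-level set in $l_{A_0}$ must be $S_{A_0}$-invariant; reconciling these requires a genuine device, not a fix-up. The standard one uses the already-constructed level-one coordinates: the sets $D_\rho$ on which the values $\phi^1(x_a)$, $a\in A_0$, occur in a prescribed relative order lie in $\sigma_{A_0}^*$ and partition $X_\infty^{A_0}$ up to a null set (the coincidence set is null because $\phi^1$ pushes $\mu$ forward to Lebesgue measure); the $S_{A_0}$-orbit of $X^{A_0}_n\cap D_\rho$ consists of pairwise disjoint sets, so its union is symmetric and intersecting back with $D_\rho$ recovers the piece. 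Without this (or an equivalent mechanism), your construction does not yield the asymmetric sets required by the conclusion.
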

  In this situation we will say that the separable system is {\em suitable} for the sets $X^B_n$. 

    Given a separable system on $X_\infty^t$, we can ``realise'' it in a symmetry-preserving way on the space $[0,1]^{r([t])}$ (considered as a measure space equipped with Lebesgue measure). 

\begin{definition} \label{realisationfact} If $\{ l_A: A \in r([t]) \}$ is a separable system, then a {\em separable realisation} of the system is a measure-preserving map $\phi : X_\infty^t \rightarrow [0,1]^{r([t])}$ such that (writing $\phi_A(x)$ for the $A$-component of $\phi(x)$)

\begin{enumerate}

\item  $\phi_A$ only depends on coordinates in $A$.

\item For every measurable set $B \subseteq [0,1]$, $\phi_A^{-1}(B) \in l_A$.

\item For every permutation $\sigma \in S_{[t]}$, $\phi_{A^\sigma}(x^\sigma) = \phi_A(x)$.    

\end{enumerate}

\end{definition}
   It follows easily from the properties of such a map $\phi$ that there exist maps $\phi^j : X_\infty^j \rightarrow [0,1]$ for $j \in [t]$ such that:

\begin{enumerate} 

\item $\phi^j$ is invariant under permutation of its arguments.

\item For all $A = \{ a_1, \ldots, a_n \} \in r( [t] )$, $\phi_A(x_1, \ldots, x_t) = \phi^n(x_{a_1}, \ldots, x_{a_t})$.

\end{enumerate} 
 We will refer to the maps $\phi^j$ as the {\em levels} of the realisation map $\phi$. For example when $t =2$, we may write $\phi(x_1, x_2)  = (\phi^1(x_1), \phi^1(x_2), \phi^2(x_1, x_2) )$ where $\phi^2$ is a symmetric function. 
   Since we are interested in structures of varying sizes with relations of varying arities, we need to vary the arities of the functions in a realisation.

 \begin{fact} \label{liftingfact} There exists $\phi : X_\infty^t \rightarrow [0,1]^{r([t])}$, a separable realisation of $\{ l_A : A \in r([t]) \}$  with  levels $\phi^j$ for $j \in [t]$.
\begin{itemize}
\item (Restriction) If $u \le t$, then $\phi$ induces a realisation $\psi: X_\infty^{u} \rightarrow [0,1]^{r([u])}$ defined by setting $\psi_{\{ b_1, \ldots, b_m\}} (x_1, \ldots, x_{u}) = \phi^m(x_{b_1}, \ldots, x_{b_m})$.

\item (Lifting) If $u \ge t$, then $\phi$ induces a measure preserving map $\psi: X_\infty^u \rightarrow [0,1]^{ r([u], t) }$ defined by setting $\psi_{\{ b_1, \ldots, b_m \}} (x_1, \ldots x_{u}) = \phi^m(x_{b_1}, \ldots, x_{b_m})$.
\end{itemize}
\end{fact}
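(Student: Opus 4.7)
The plan is to construct $\phi$ by gluing together Maharam isomorphisms across the orbits of $S_{[t]}$ on $r([t])$, read off the levels by equivariance, handle Restriction as essentially a marginalisation, and prove Lifting via a conditional measure-preservation argument powered by the independence condition built into separable systems.

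To build $\phi$, I would pick orbit representatives for the $S_{[t]}$-action on $r([t])$. For each representative $A$, the algebra $l_A$ is atomless, separable, and consists of $S_A$-invariant sets in $\sigma_A$, so Maharam's theorem provides a measure-preserving map $\phi_A : X_\infty^t \to [0,1]$ whose Borel preimages sit in $l_A$, depend only on $A$-coordinates, and are symmetric under $S_A$. Extending equivariantly by $\phi_{A^\sigma}(x) := \phi_A(x^{\sigma^{-1}})$ gives a well-defined family (using $S_A$-invariance of $\phi_A$); the levels $\phi^j$ are then read off via $\phi_A(x) = \phi^{\vert A \vert}(x_{a_1}, \ldots, x_{a_{\vert A \vert}})$ for $A = \{a_1 < \ldots < a_{\vert A \vert}\}$, with equivariance ensuring that $\phi^j$ is a symmetric function independent of which size-$j$ subset was used. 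To see that the assembled $\phi$ is measure-preserving onto product Lebesgue, I would order $r([t])$ by any linear extension of $\subseteq$ (subsets before supersets) and inductively show that for each $A$, $l_A$ is independent of the $\sigma$-algebra generated by the previously treated $l_{A'}$: each such $A'$ is either a proper subset of $A$ with $l_{A'} \subseteq \sigma_{A'} \subseteq \sigma_A^*$, or incomparable with $A$, in which case $l_{A'}$ sits in the algebra generated by $\sigma_A^*$ together with $\sigma_{[t] \setminus A}$. The defining independence $l_A \perp \sigma_A^*$, combined with the Fubini-type independence of $\sigma_A$ from $\sigma_{[t]\setminus A}$ on the Loeb product, then yields joint independence, hence product Lebesgue.

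The Restriction claim ($u \le t$) is essentially bookkeeping: each $\psi_B(x) = \phi^{\vert B \vert}(x_B)$ for $B \in r([u])$ depends only on coordinates in $[u] \subseteq [t]$, and $r([u])$ embeds in $r([t])$ with the marginal of product Lebesgue on $[0,1]^{r([t])}$ over coordinates in $r([t]) \setminus r([u])$ being product Lebesgue on $[0,1]^{r([u])}$; since $\mu_{[u]}$ is the marginal of $\mu_{[t]}$ on the first $u$ coordinates, measure preservation follows.

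The Lifting claim ($u \ge t$) is the main technical step, because distinct $B_1, B_2 \in r([u], t)$ may have $\vert B_1 \cup B_2 \vert > t$, so the original $\phi$ on $X_\infty^t$ does not directly provide joint information. The key lemma to establish is a conditional measure-preservation statement: for each $B$ with $\vert B \vert \le t$ and any proper subset $I \subsetneq B$, the independence $l_{[\vert B \vert]} \perp \sigma_{[\vert B \vert]}^*$ implies (via disintegration over the $I$-coordinates) that the fibre map $y \mapsto \phi^{\vert B \vert}(x_I, y)$ is measure-preserving $X_\infty^{\vert B \setminus I \vert} \to [0,1]$ for $\mu$-almost every $x_I$. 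Running a linear-extension induction on $r([u], t)$, at each stage $\psi_{B_j}$ is shown independent of the $\sigma$-algebra generated by the previously treated $\psi_{B_i}$ by conditioning on $x_{I_j}$ with $I_j = B_j \cap \bigcup_{i<j} B_i$, applying conditional measure-preservation inside $B_j \setminus I_j$, and invoking Fubini independence for the disjoint coordinate blocks $B_j \setminus I_j$ and $(\bigcup_{i<j} B_i) \setminus I_j$. Organising this conditioning cleanly across all overlap patterns is where the main care is required.
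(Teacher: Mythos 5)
The paper does not prove this statement: it is imported verbatim as background from Elek and Szegedy \cite{ES08b}, so there is no in-paper argument to compare yours against, and I am judging your proposal on its own terms. Your architecture is the right one (Maharam's theorem applied once per $S_{[t]}$-orbit, equivariant extension, reading off the levels $\phi^j$, and marginalisation for the Restriction clause, which is fine as written), but the step on which everything rests --- that the assembled map pushes $\mu_{[t]}$ onto \emph{product} Lebesgue measure, i.e.\ that the algebras $\{l_A\}$ are \emph{jointly} independent --- is justified by a claim that is false. You assert that for $A'$ incomparable with $A$, the algebra $l_{A'}$ ``sits in the algebra generated by $\sigma_A^*$ together with $\sigma_{[t]\setminus A}$.'' Since $A' \subseteq (A'\cap A)\cup(A'\setminus A)$, this would require $\sigma_{A'}$ to be contained in $\sigma_{A'\cap A}\vee\sigma_{A'\setminus A}\subseteq\sigma_{A'}^*$, which contradicts precisely the point the paper emphasises: $\sigma_{A'}$ is much larger than $\sigma_{A'}^*$. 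Concretely, for $t=3$, $A=\{1,2\}$, $A'=\{2,3\}$, a set in $l_{\{2,3\}}$ genuinely depends on the pair $(x_2,x_3)$ and is not generated by sets depending on single coordinates, so your containment fails and the independence of $l_{\{1,2\}}$ from $l_{\{2,3\}}$ does not follow from $l_{\{1,2\}}\perp\sigma_{\{1,2\}}^*$ plus Fubini. (Separately, even where the containment does hold, ``$X\perp\mathcal F$ and $X\perp\mathcal G$'' does not by itself give ``$X\perp\mathcal F\vee\mathcal G$.'')

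The missing ingredient is the key conditional-expectation lemma of Elek and Szegedy about the Loeb measure: for bounded $\sigma_A$-measurable $f$ and any family $B_1,\ldots,B_n\subseteq[t]$, one has $E\bigl(f\mid\bigvee_i\sigma_{B_i}\bigr)=E\bigl(f\mid\bigvee_i\sigma_{A\cap B_i}\bigr)$. Granting this, your induction along a linear extension of $\subseteq$ does close: the newly added $A$ satisfies $A\cap A'\subsetneq A$ for every earlier $A'$, so conditioning $1_{Y_A}$ on $\bigvee_{A'}\sigma_{A'}$ reduces to conditioning on a subalgebra of $\sigma_A^*$, where $l_A\perp\sigma_A^*$ and the tower property give the constant $\mu(Y_A)$. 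The same lemma is what you need, and do not supply, in the Lifting clause: for $B_1,B_2\in r([u],t)$ with nonempty overlap $I$, your appeal to ``Fubini independence for the disjoint coordinate blocks'' tacitly treats $\sigma_{B_2}$ as if it were generated by $\sigma_I$ and $\sigma_{B_2\setminus I}$, which is the same false reduction. Your per-fibre measure-preservation observation for a single $B$ is fine, but the joint independence across overlapping $B_j$ is exactly where the nontrivial Loeb-measure input enters, and without it the proof has a genuine gap.
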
 
   In a mild abuse of notation we will not distinguish carefully between the original realisation $\phi$ and its various restrictions and liftings. 

\section{Limits, regularity and removal for pure relational structures} \label{withoutT}
   In this section we develop some technical machinery and then prove versions of the existence of limits, Strong Removal and Regularity for finite relational structures. Many of our arguments run parallel to those given by Elek and Szegedy \cite{ES08b} for the case of uniform hypergraphs.

 In the first part of this section we define limit objects for convergent sequences $(\mathcal N_k)$ of finite structures. We proceed as follows:
\begin{itemize}

\item  Form the ultraproduct ${\mathcal N}_\infty$.

\item  Use separable realisations to associate to ${\mathcal N}_\infty$ a family ${\mathfrak E} =  (E^i_p : (i, p) \in \Index )$ of Lebesgue measurable sets where $E^i_p \subseteq [0,1]^{r[\Vert p \vert]}$.

\item  Associate to $\mathfrak E$ a random sequence $(\boldsymbol{\mathcal N}_k)$  of finite structures, and prove that almost surely $(\boldsymbol{\mathcal N}_k)$  is convergent and  $\lim_{k \rightarrow \infty} p( {\mathcal M}, \boldsymbol{\mathcal N}_k) = \lim_{k \rightarrow \infty} p( {\mathcal M}, {\mathcal N}_k)$ for all finite structures $\mathcal M$. 

\end{itemize} 
   In the remainder of the section we use the existence of limits to prove general versions of Strong Removal and Regularity.

\subsection{Limits} \label{limsec} 
    We are given a convergent sequence $( {\mathcal N}_k )$. Recall that we defined a function $\Phi_p$ on the class of finite ${\mathcal L}$-structures by setting $\Phi_p({\mathcal M}) = \lim_{k \rightarrow \infty} p( {\mathcal M}, {\mathcal N}_k)$. 
    
  We will begin by forming the ultraproduct ${\mathcal N}_\infty = \prod_k  {\mathcal N}_k/U$. Let $X_k = \vert {\mathcal N}_k \vert$ and $X_\infty = \prod_k  X_k/U$, so that ${\mathcal N}_\infty$ is an ${\mathcal L}$-structure with underlying set $X_\infty$. The first key point is that the structure ${\mathcal N}_\infty$ captures the function $\Phi_p$.
\begin{claim} \label{claimone} For every finite ${\mathcal L}$-structure $\mathcal M$, $\Phi_p({\mathcal M})$ is the standard part of $p({\mathcal M}^*, {\mathcal N}_\infty)$,
and similar statements hold for the density functions $t$, $t_0$ and $t_{\rm ind}$. 

\end{claim}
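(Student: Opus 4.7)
The plan is to unpack $p(\mathcal{M},\mathcal{N})$ as a definable counting ratio, transfer the definition to $V^*$ via elementarity, and finish by taking standard parts using the convergence hypothesis.

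First I would write, for finite $\mathcal{L}$-structures $\mathcal{M}$ and $\mathcal{N}$,
\[
p(\mathcal{M}, \mathcal{N}) = \frac{\vert\{ A \subseteq \vert\mathcal{N}\vert : \vert A\vert = \Vert\mathcal{M}\Vert \text{ and } \mathcal{N}\vert_A \simeq \mathcal{M}\}\vert}{\binom{\Vert\mathcal{N}\Vert}{\Vert\mathcal{M}\Vert}},
\]
a ratio defined by a single first-order formula in the language of set theory with $(\mathcal{M},\mathcal{N})$ as parameters. Applying this same formula in $V^*$ to the parameters $(\mathcal{M}^*, \mathcal{N}_\infty)$ yields an element $p(\mathcal{M}^*, \mathcal{N}_\infty) \in [0,1]^*$ computed internally by the analogous counting recipe. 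Since $\mathcal{M}$ is a fixed finite structure, the map $m \mapsto m^*$ is a bijection $\vert\mathcal{M}\vert \to \vert\mathcal{M}^*\vert$ lying in $V^*$ which is (internally) an isomorphism, so for $A \in V^*$ the statement ``$\mathcal{N}_\infty\vert_A \simeq \mathcal{M}^*$'' inside $V^*$ is equivalent to the natural external reading ``$\mathcal{N}_\infty\vert_A \simeq \mathcal{M}$.'' This reconciles the internal and external uses of $p(\mathcal{M}^*, \mathcal{N}_\infty)$.

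Next I would apply {\L}o{\'s}' theorem (together with the concrete description of the Loeb measure in subsection \ref{ultrabasics}) to conclude that
\[
p(\mathcal{M}^*, \mathcal{N}_\infty) = [\, p(\mathcal{M}, \mathcal{N}_k)\,]_U \in [0,1]^*.
\]
Taking standard parts and using the fact (from subsection \ref{ultrafilters}) that $std([r_k]_U) = \lim_{k \to U} r_k$ for any bounded real sequence $(r_k)$, I obtain
\[
std(p(\mathcal{M}^*, \mathcal{N}_\infty)) = \lim_{k \to U} p(\mathcal{M}, \mathcal{N}_k) = \lim_{k \to \infty} p(\mathcal{M}, \mathcal{N}_k) = \Phi_p(\mathcal{M}),
\]
where the second equality uses that an ultralimit agrees with an ordinary limit when the latter exists, and the third is the definition of $\Phi_p$ for the convergent sequence $(\mathcal{N}_k)$.

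For $t$, $t_0$ and $t_{\rm ind}$ the argument is identical after replacing the counting ratio above by the appropriate one: $t$ counts homomorphisms over $\Vert\mathcal{N}\Vert^{\Vert\mathcal{M}\Vert}$, while $t_0$ and $t_{\rm ind}$ count injective homomorphisms and embeddings, respectively, over the descending factorial $\Vert\mathcal{N}\Vert^{\underline{\Vert\mathcal{M}\Vert}}$. I do not anticipate a serious obstacle; the only point requiring genuine care is the internal/external distinction between $\mathcal{M}$ and $\mathcal{M}^*$ when interpreting isomorphism and embedding inside $V^*$, and this is handled once and for all by the canonical identification above. Everything else is bookkeeping around {\L}o{\'s}' theorem and the definition of standard part.
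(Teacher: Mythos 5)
Your proof is correct and follows essentially the same route as the paper: identify $p(\mathcal{M}^*,\mathcal{N}_\infty)$ with the class $[\,p(\mathcal{M},\mathcal{N}_k)\,]$ via {\L}o{\'s}' theorem, then use that the standard part of this class is the $U$-limit, which coincides with the ordinary limit $\Phi_p(\mathcal{M})$ since the sequence converges and $U$ is non-principal. The paper's proof is simply a terser version of the same argument (it takes the representation of $p(\mathcal{M}^*,\mathcal{N}_\infty)$ by the sequence of densities as immediate), and your extra care with the internal/external reading of isomorphism is sound but not a new idea.
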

\begin{proof}   
 The standard part of $p({\mathcal M}^*, {\mathcal N}_\infty)$ is $\lim_{k \rightarrow U } p({\mathcal M}, {\mathcal N}_k)$, that is the unique real number $r$ such that for every $\epsilon > 0$, $\{ k : p({\mathcal M}, {\mathcal N}_k) \in (r - \epsilon, r + \epsilon) \} \in U$.
 Since $p({\mathcal M}, {\mathcal N}_k) \rightarrow \Phi_p(M)$ we see that $r = \Phi_p(M)$. The proofs for the other density functions are exactly similar.
\end{proof} 
  
    We now recall the coding scheme from subsection \ref{coding}, in which an arbitrary $\mathcal L$-structure $\mathcal N$ is coded by a family $\Code({\mathcal N}) = (\DHyp_p^{ {\mathcal N}, i}: (i, p) \in \Index)$.  Since the coding scheme is defined in a  uniform way, it is easy to see that the family of hypergraphs $\Code( {\mathcal N}_\infty )$ is represented in the ultrapower by the sequence $(\Code( {\mathcal N}_k) : k \in {\mathbb N})$. 
  It is immediate from the definitions that if $p \in {\rm Part}_{[r_i]}$ is a partition with $\Vert p \Vert = t$, then the relation $\DHyp_p^{ {\mathcal N}_\infty, i}$ is represented by $(\DHyp_p^{ {\mathcal N}_k, i} : k \in {\mathbb N})$, in particular it is a subset of $X_\infty^{t}$ which lies in the $\sigma$-algebra $\sigma_{[t]}$.

 Let $r_{\rm max} = \max_i r_i$.
 Appealing to Facts \ref{systemfact} and \ref{liftingfact} we may find a separable system $\{ l_A : A \in r([r_{\rm max}]) \}$ which is suitable for all the sets appearing in $\Code( {\mathcal N}_\infty )$, and then a realisation $\phi: X_\infty^{r_{\rm max}} \rightarrow [0, 1]^{r([r_{\rm max}])}$ for this system. In line with the discussion from section \ref{EStheory}, for all $t \le r_{\rm max}$ we obtain restricted separable realisations $\phi: X_\infty^t \rightarrow [0, 1]^{r([t])}$. 
 
 We associate a measurable set in a suitable power of the unit interval to each set of the form $\DHyp_p^{ {\mathcal N}_\infty, i}$. Let $\Vert p \Vert = t$, and choose $E_p^i \subseteq [0,1]^{r[t]}$ such that $\DHyp_p^{ {\mathcal N}_\infty, i} \Delta \phi^{-1} [E_p^i]$ is a null set for the measure $\mu_{[t]}$.
 We note that the definition of $E_p^i$ involves using restrictions of the original map $\phi$, and is legitimate because $\Vert p \Vert \le r_i \le r_{\rm max}$ for all $(i, p) \in \Index$.

 Let ${\mathfrak E} = (E_p^i: (i, p) \in \Index)$; systems of measurable sets of this form will serve as limit objects for convergent sequences of ${\mathcal L}$-structures. 

\begin{definition}  An {\em ${\mathcal L}$-limit} is a family of sets ${\mathfrak F} =  (F_p^i: (i, p) \in \Index)$ such that  $F_p^i$ is a measurable subset of $[0,1]^{r[\Vert p \Vert]}$  for each $p$ and $i$.

\end{definition} 
  
  Let $\mathfrak F$ be an ${\mathcal L}$-limit. We will view $\mathfrak F$  as a template for the construction, for each integer $m$, of a random $\mathcal L$-structure $\boldsymbol{\mathcal N}( {\mathfrak F}, m)$ with underlying set $[m]$.
To define this random structure we will choose uniformly at random a tuple of real numbers $\vec y \in [0,1]^{r([m], r_{\rm max})}$, and compute from this tuple a structure
 ${\mathcal N}( {\mathfrak F}, m, \vec y)$ with underlying set $[m]$.

 To define the structure ${\mathcal N}( {\mathfrak F}, m, \vec y)$, we describe its coding by directed hypergraphs. For each  $(i, p) \in \Index$ and each $t$-tuple  $(b_1, \ldots, b_t) \in [m]^{\underline t}$, let $\DHyp_p^{ {\mathcal N}({\mathfrak F}, m, {\vec y}), i}(b_1, \ldots, b_t)$ if and only if  $\vec z \in F_p^i$,
where  $\vec z = (z_B : B \in r([t]))$ is given by  $z_B = y_{ \{b_l : l \in B \} }$.

For the record we also give an uncoded version. Let  $\vec a = (a_1, \ldots, a_{r_i})$ be an $r$-tuple of elements of $[m]$. Let $p = p(\vec a) \in {\rm Part}_{[r_i]}$ be the induced partition of $[r_i]$, in which $j$ and $j'$ are in the same class if and only if $a_j = a_{j'}$, and let $t = \Vert p \Vert$.

 Let $\vec b = C(\vec a)$ be the $t$-tuple obtained from $\vec a$ by deleting repetitions, so that $\vec b \in [m]^{\underline t}$, and let $\vec z = (z_B : B \in r([t]))$ be given by  $z_B = x_{ \{b_l : l \in B \} }$.
 Then $R_i^{ {\mathcal N}({\mathfrak F}, m, {\vec y}), i}(a_1, \ldots, a_{r_i})$ if and only if  $\vec z \in F_p^i$.

We will now form the random sequence $(\boldsymbol{\mathcal N}({\mathfrak E}, j^2) : j \in {\mathbb N})$, which  (formally speaking) is a random variable defined on the product over $j \in {\mathbb N}$ of the probability spaces associated with the random structures $\boldsymbol{\mathcal N}({\mathfrak E}, j^2)$.
 We  will prove that almost surely the random sequence $(\boldsymbol{\mathcal N}({\mathfrak E}, j^2))$ is convergent and resembles the original sequence $( {\mathcal N}_k )$ in a certain precise sense.
   Our proof uses ideas from work by Razborov \cite{Raz07}. Razborov associates to each convergent sequence of finite ${\mathcal L}$-structures\footnote{Actually Razborov's theory is more general, and applies to the class of models of a universal theory containing a labelled copy of a given finite structure.}  a limit object of a rather different kind, namely a homomorphism from a certain ${\mathbb R}$-algebra (which depends only on ${\mathcal L}$) to  $\mathbb R$. The following fact appears (in a more general form) as Theorem 3.18 in \cite{Raz07}. 
\begin{fact} \label{Razfact}
Let $( {\mathcal N}_k : k \in {\mathbb N})$ be a convergent sequence of finite $\mathcal L$-structures.
 For each $j$, make the set of isomorphism classes of structures ${\mathcal M}$ with $\Vert {\mathcal M} \Vert = j^2$  into a finite probability space in which each $\mathcal M$ is assigned probability $\Phi_p({\mathcal M})$. Let $(\boldsymbol {\mathcal N}_j^* : j \in {\mathbb N})$ be the associated random sequence of structures (that is it's the random variable on the product of the finite probability spaces given by the identity function on this product). Then almost surely $(\boldsymbol {\mathcal N}_j^* : j \in {\mathbb N})$ is convergent and $\lim_{j \rightarrow \infty} p({\mathcal M}, \boldsymbol{\mathcal N}_j^*) = \Phi_p({\mathcal M})$ for all finite ${\mathcal L}$-structures $\mathcal M$. 
\end{fact}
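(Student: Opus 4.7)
The plan is to prove three things in turn:
(i) $\mathbb{E}[p(\mathcal{M}, \boldsymbol{\mathcal{N}}_j^*)] = \Phi_p(\mathcal{M})$ for every finite $\mathcal{M}$ and every $j$ with $j^2 \ge \Vert \mathcal{M} \Vert$;
(ii) a quantitative variance bound $\mathrm{Var}[p(\mathcal{M}, \boldsymbol{\mathcal{N}}_j^*)] = O(1/j^2)$;
(iii) Chebyshev's inequality together with the easy half of the Borel--Cantelli lemma for each fixed $\mathcal{M}$, followed by a countable intersection over the (countably many) isomorphism classes of finite $\mathcal{L}$-structures and rational thresholds $\epsilon > 0$. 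Since $\Vert \boldsymbol{\mathcal{N}}_j^* \Vert = j^2 \to \infty$ deterministically, the sequence is automatically increasing, so only convergence of the densities needs argument.

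For step (i), I would invoke the elementary two-step-sampling identity
\[
p(\mathcal{M}, \mathcal{N}_k) \;=\; \sum_{\mathcal{L} : \Vert \mathcal{L} \Vert = j^2} p(\mathcal{L}, \mathcal{N}_k)\, p(\mathcal{M}, \mathcal{L}),
\]
valid when $\Vert \mathcal{N}_k \Vert \ge j^2 \ge \Vert \mathcal{M} \Vert$, which is immediate from conditioning the uniform random size-$\Vert \mathcal{M} \Vert$ subset on the uniform random size-$j^2$ subset that contains it. The sum is finite, so letting $k \to \infty$ yields
\[
\Phi_p(\mathcal{M}) \;=\; \sum_{\mathcal{L}} \Phi_p(\mathcal{L})\, p(\mathcal{M}, \mathcal{L}) \;=\; \mathbb{E}[p(\mathcal{M}, \boldsymbol{\mathcal{N}}_j^*)].
\]

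For step (ii), set $s = \Vert \mathcal{M} \Vert$ and let $A, B$ be independent uniform size-$s$ subsets of $[j^2]$. Conditional on $\boldsymbol{\mathcal{N}}_j^*$, the indicators of $\boldsymbol{\mathcal{N}}_j^* \vert_A \simeq \mathcal{M}$ and $\boldsymbol{\mathcal{N}}_j^* \vert_B \simeq \mathcal{M}$ are independent, hence
\[
\mathbb{E}\bigl[p(\mathcal{M}, \boldsymbol{\mathcal{N}}_j^*)^2\bigr] \;=\; \mathbb{P}\bigl(\boldsymbol{\mathcal{N}}_j^*\vert_A \simeq \mathcal{M},\ \boldsymbol{\mathcal{N}}_j^*\vert_B \simeq \mathcal{M}\bigr).
\]
Since $\mathbb{P}(A \cap B \ne \emptyset) = O(s^2/j^2)$, the contribution from overlapping $A, B$ is negligible; conditional on $A \cap B = \emptyset$ the set $A \cup B$ is a uniform size-$2s$ subset, so by step (i) applied with $2s$ in place of $\Vert \mathcal{M} \Vert$ the substructure $\boldsymbol{\mathcal{N}}_j^*\vert_{A\cup B}$ has distribution $\mathcal{L} \mapsto \Phi_p(\mathcal{L})$. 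The dominant contribution is therefore $\sum_{\mathcal{L} : \Vert \mathcal{L} \Vert = 2s} \Phi_p(\mathcal{L})\, q(\mathcal{M}, \mathcal{L})$, where $q(\mathcal{M}, \mathcal{L})$ is the probability that a uniform random size-$s$ subset $\boldsymbol{A'} \subseteq [2s]$ satisfies $\mathcal{L}\vert_{\boldsymbol{A'}} \simeq \mathcal{M}$ and $\mathcal{L}\vert_{[2s] \setminus \boldsymbol{A'}} \simeq \mathcal{M}$. A second application of the two-step identity, now for pairs of disjoint size-$s$ subsets of $\mathcal{N}_k$, together with $\mathbb{P}(A \cap B = \emptyset) = 1 - O(s^2/\Vert \mathcal{N}_k \Vert)$, gives $\sum_\mathcal{L} p(\mathcal{L}, \mathcal{N}_k) q(\mathcal{M}, \mathcal{L}) = p(\mathcal{M}, \mathcal{N}_k)^2 + O(s^2/\Vert \mathcal{N}_k \Vert)$, and taking $k \to \infty$ yields the key factorisation $\sum_{\mathcal{L}} \Phi_p(\mathcal{L})\, q(\mathcal{M}, \mathcal{L}) = \Phi_p(\mathcal{M})^2$. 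Combining gives $\mathrm{Var}[p(\mathcal{M}, \boldsymbol{\mathcal{N}}_j^*)] = O(s^2/j^2)$.

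For step (iii), Chebyshev yields $\mathbb{P}\bigl(\bigl|p(\mathcal{M}, \boldsymbol{\mathcal{N}}_j^*) - \Phi_p(\mathcal{M})\bigr| > \epsilon\bigr) = O(1/(\epsilon^2 j^2))$ for each rational $\epsilon > 0$, which is summable in $j$; the easy half of Borel--Cantelli then forces $p(\mathcal{M}, \boldsymbol{\mathcal{N}}_j^*) \to \Phi_p(\mathcal{M})$ almost surely. A countable intersection over the isomorphism classes $\mathcal{M}$ and the rationals $\epsilon$ completes the proof. The hardest step is the factorisation identity $\sum_{\mathcal{L}} \Phi_p(\mathcal{L}) q(\mathcal{M}, \mathcal{L}) = \Phi_p(\mathcal{M})^2$ used in (ii); this is the only place where convergence of the original sequence $(\mathcal{N}_k)$ is used essentially, rather than merely that $\boldsymbol{\mathcal{N}}_j^*$ has the correct marginals on isomorphism type.
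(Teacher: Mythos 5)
Your argument is correct, but note that the paper does not actually prove this statement: it is quoted as a ``Fact'' with a pointer to Theorem 3.18 of Razborov's flag algebras paper, so there is no in-paper proof to compare against. What you have written is a self-contained, elementary version of essentially the same mathematics that underlies Razborov's theorem: your two-step identity $p(\mathcal{M},\mathcal{N})=\sum_{\mathcal{L}}p(\mathcal{L},\mathcal{N})\,p(\mathcal{M},\mathcal{L})$ is the chain rule for induced densities, and your factorisation $\sum_{\mathcal{L}}\Phi_p(\mathcal{L})\,q(\mathcal{M},\mathcal{L})=\Phi_p(\mathcal{M})^2$ is the ``product of densities equals density of disjoint pairs up to $O(1/\Vert\mathcal{N}\Vert)$'' computation that Razborov phrases algebraically; the Chebyshev--Borel--Cantelli finish is also how the sample sizes $j^2$ earn their keep (summability of $1/j^2$). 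Your route has the advantage of avoiding the flag-algebra formalism entirely and making visible exactly where convergence of the original sequence is used (only in the two passages to the limit over the finite sums); the cost is that it is specific to the zero-flag, empty-theory situation, whereas Razborov's statement covers labelled flags and general theories. All the individual steps check out: the conditional independence of the two sampled subsets given $\boldsymbol{\mathcal{N}}_j^*$, the fact that two independent uniform $s$-subsets conditioned to be disjoint have uniform union, the finiteness of the sums over isomorphism classes (needed to interchange sum and limit, and guaranteed because $\mathcal{L}$ is a finite relational language), and the reduction of ``convergent'' to convergence of the densities since $\Vert\boldsymbol{\mathcal{N}}_j^*\Vert=j^2\to\infty$ deterministically.
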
   

\begin{theorem} \label{limittheorem}  Almost surely  $(\boldsymbol{\mathcal N}({\mathfrak E}, j^2))$ is a convergent sequence and 
\[  
  \lim_{j \rightarrow \infty} p({\mathcal M}, \boldsymbol{\mathcal N}({\mathfrak E}, j^2)) = \Phi_p({\mathcal M})
\]
for all finite structures $\mathcal M$. 
\end{theorem}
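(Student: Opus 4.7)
The plan is to route through Razborov's Fact \ref{Razfact}. If I can show that for each $m$ the (unlabelled) isomorphism class of $\boldsymbol{\mathcal N}({\mathfrak E}, m)$ is distributed according to the probability measure assigning mass $\Phi_p({\mathcal M})$ to each $\mathcal M$ with $\Vert \mathcal M \Vert = m$, then, because $(\boldsymbol{\mathcal N}({\mathfrak E}, j^2))_j$ and Razborov's $(\boldsymbol{\mathcal N}^*_j)_j$ are both defined as products of independent finite probability spaces with these same marginals, the two random sequences have identical joint distributions for their isomorphism types, and Fact \ref{Razfact} delivers the conclusion at once.

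The core is therefore the identity
\[
 \Pr[\boldsymbol{\mathcal N}({\mathfrak E}, m) \simeq {\mathcal M}] = \Phi_p({\mathcal M})
 \quad \text{whenever } \Vert \mathcal M \Vert = m.
\]
I would prove this by sampling $\vec x = (x_1, \ldots, x_m) \in X_\infty^m$ uniformly with respect to $\mu_{[m]}$, and setting $y_C = \phi^{|C|}(x_{c_1}, \ldots, x_{c_{|C|}})$ for every $C \in r([m], r_{\rm max})$, where $c_1 < \ldots < c_{|C|}$ enumerate $C$. By the Lifting clause of Fact \ref{liftingfact}, the map $\vec x \mapsto \vec y$ is measure-preserving from $X_\infty^m$ onto $[0,1]^{r([m], r_{\rm max})}$, so $\vec y$ has exactly the uniform distribution used to build $\boldsymbol{\mathcal N}({\mathfrak E}, m)$. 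Unpacking the decoding rule together with the fact that $\DHyp_p^{\mathcal N_\infty, i} \Delta \phi^{-1}[E_p^i]$ is null for every $(i,p) \in \Index$, I find that, outside a $\mu_{[m]}$-null event, the relabelling $i \mapsto x_i$ sends $\mathcal N({\mathfrak E}, m, \vec y)$ isomorphically onto the induced substructure $\mathcal N_\infty|_{\{x_1, \ldots, x_m\}}$. A nonstandard counting argument inside $V^*$, using that $|X_\infty|$ is infinite and hence $|X_\infty|^{\underline m}/|X_\infty|^m$ is infinitesimally close to $1$, then identifies the $\mu_{[m]}$-probability that this substructure is isomorphic to $\mathcal M$ with $\mathrm{std}\bigl(p({\mathcal M}^*, \mathcal N_\infty)\bigr)$, which is $\Phi_p({\mathcal M})$ by Claim \ref{claimone}.

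The main obstacle is the bookkeeping in this translation. One has to check that the coordinates $y_C$ indexed by \emph{sets} and the values $\phi^{|C|}(x_{c_1}, \ldots, x_{c_{|C|}})$ computed on ordered \emph{tuples} of those coordinates agree, which is precisely where the symmetry clause in Definition \ref{realisationfact}(3) is used; that the partition $p(\vec a)$ and compression $C(\vec a)$ appearing in the decoding scheme of subsection \ref{coding} line up with the restriction/lifting description given by Fact \ref{liftingfact}; and that the finitely many null sets contributed by the various $(i,p) \in \Index$, together with the diagonal $\{\vec x : x_i = x_j \text{ for some } i \ne j\}$, assemble into a single $\mu_{[m]}$-null exceptional set. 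Apart from this book-keeping the argument is a routine combination of the measure-preservation properties of $\phi$ and the choice of the sets $E_p^i$ with Razborov's theorem.
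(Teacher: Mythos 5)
Your proposal is correct and follows essentially the same route as the paper: reduce to the single identity $\Pr[\boldsymbol{\mathcal N}({\mathfrak E},m)\simeq{\mathcal M}]=\Phi_p({\mathcal M})$ via Fact \ref{Razfact}, and establish it by transporting the uniform measure on $[0,1]^{r([m],r_{\rm max})}$ through the lifted measure-preserving realisation $\phi$ and using that $\DHyp_p^{{\mathcal N}_\infty,i}\Delta\phi^{-1}[E_p^i]$ is null. The only (harmless) variation is that you compare isomorphism types of induced substructures directly, where the paper compares the labelled embedding sets $T_{\rm ind}({\mathcal M},{\mathcal N}_\infty)$ and $T_{\rm ind}({\mathcal M},{\mathfrak E})$ and then converts $t_{\rm ind}$ to $p$ via Fact \ref{ptfact}.
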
 
    Before starting the proof, we define some auxiliary sets relating to the density function $t_{\rm ind}$.

\begin{definition} 

 Let $\mathcal M$ be a finite ${\mathcal L}$-structure with underlying set $X$, enumerated as $x_1, \ldots x_m$.

\begin{enumerate} 
\item  Let $\mathcal N$ be an arbitrary ${\mathcal L}$-structure. Then $T_{\rm ind}( {\mathcal M}, {\mathcal N} )$ is the set of $m$-tuples $(x'_1, \ldots, x'_m) \in \vert {\mathcal N} \vert^k$ such that the map which takes $x_i$ to $x'_i$ for each $i$ is an embedding from $\mathcal M$ to $\mathcal N$.

\item  Let $\mathfrak F$ be an $\mathcal L$-limit. Then $T_{\rm ind}( {\mathcal M}, {\mathfrak F})$ is the set of tuples $\vec y \in [0,1]^{r([m], r_{\rm max})}$ such that the map which takes $x_i$ to $i$ for each $i$ is an embedding from $\mathcal M$ to ${\mathcal N}( {\mathfrak F}, m, \vec y)$.   

\end{enumerate}
\end{definition}
   Of course the definitions of these sets depend on the choice of an enumeration for $\vert {\mathcal M} \vert$, but this is irrelevant in applications. 
  We note that when $\mathcal N$ is finite, $t_{\rm ind}( {\mathcal M}, {\mathcal N})$ is just the normalised counting measure of $T_{\rm ind}( {\mathcal M}, {\mathcal N})$.

\begin{proof}  

 By Fact \ref{Razfact}, it will suffice to show that for each $m$ and each structure $\mathcal M$ with $\Vert {\mathcal M} \Vert = m$, $P( \boldsymbol{\mathcal N}( {\mathfrak E}, m)  \simeq {\mathcal M} ) = \Phi_p( {\mathcal M} )$.
 We will actually show that the probability $p$ that a random bijection from $\vert {\mathcal M} \vert$ to $\vert \boldsymbol{\mathcal N}( {\mathfrak E}, m) \vert$ is an isomorphism is $\Phi_{t_{\rm ind}}( {\mathcal M} )$, from which the desired conclusion follows immediately using Fact \ref{ptfact}.  
 To show that $p = \Phi_{t_{\rm ind}}( {\mathcal M} )$, we  will define measurable sets $F_1$ and $F_2$ such that the measure of $F_1$ is $\Phi_{t_{\rm ind}}({\mathcal M})$ and the measure of $F_2$ is $p$, and then use the ideas of subsection \ref{EStheory} to show that $F_1$ and $F_2$ have the same measure.

 Let $F_1 = T_{\rm ind}( {\mathcal M}, {\mathcal N}_\infty)$;    by Claim \ref{claimone}  and the  definition of $\mu_{[m]}$ as the standard part of the normalised counting measure on $X_\infty^m$, it is immediate that $\Phi_{t_{\rm ind}}({\mathcal M}) = \mu_{[m]}(F_1)$.

 Let $F_2 = T_{\rm ind}({\mathcal M}, {\mathfrak E})$; it is easy to see that the probability $p$ that a bijection between $\vert {\mathcal M} \vert$ and $\vert \boldsymbol{\mathcal N}( {\mathfrak E}, m)  \vert$ selected uniformly at random is an isomorphism is $\lambda(F_2)$, where $\lambda$ is Lebesgue measure on  $[0,1]^{r([m],r_{\rm max} )}$. 

Let $\vert {\mathcal M} \vert = \{ x_1, \ldots, x_m \}$. By Fact \ref{liftingfact}, the separable system $\{ l_A : A \in r([r_{\rm max}]) \}$ and its realisation $\phi: X_\infty^{r_{\rm max}} \rightarrow [0, 1]^{r([r_{\rm max}])}$ can be lifted, to obtain a family of algebras $\{ l_A : A \in r([m], r_{\rm max}) \}$ and a map $\phi: X_\infty^m \rightarrow [0, 1]^{r([m], r_{\rm max})}$.
 
 By definition, $F_1$ is the set of tuples $(x'_1, \ldots, x'_m) \in X_\infty^{\underline m}$ such that for every $i \in [n]$,  and every $r_i$-tuple $(a_1, \ldots, a_{r_i})$ of elements of $[m]$, $R_i^{\mathcal M}(x_{a_1}, \ldots, x_{a_{r_i}})$ if and only if $R_i^{ {\mathcal N}_\infty }(x'_{a_1}, \ldots, x'_{a_{r_i}})$.
 Recalling how we coded relations into families of directed hypergraphs, it is routine to check that $F_1$ is the set of tuples  $(x'_1, \ldots, x'_m) \in X_\infty^{\underline m}$ such that for every $(i, p) \in \Index$ (say with $\Vert p \Vert = t$) and every $t$-tuple $(b_1, \ldots, b_t) \in [m]^{\underline t}$, $\DHyp_p^{ {\mathcal M}, i}(x_{b_1}, \ldots, x_{b_t})$ if and only if $\DHyp_p^{ {\mathcal N}_\infty, i}(x'_{b_1}, \ldots, x'_{b_t})$.

 By definition, $F_2$ is the set of tuples $\vec y$ of reals such that  for every $i \in [n]$, and every $r_i$-tuple $(a_1, \ldots, a_{r_i})$ of elements of $[m]$, $R_i^{\mathcal M}(x_{a_1}, \ldots, x_{a_{r_i}})$ if and only if $R_i^{ {\mathcal N}({\mathfrak E}, m, {\vec y}) }(a_1, \ldots, a_{r_i})$.
 In terms of the coding by directed hypergraphs, this condition states that for every $(i, p) \in \Index$ with $\Vert p \Vert = t$ say, and every $t$-tuple $(b_1, \ldots, b_t) \in [m]^{\underline t}$, $\DHyp_p^{ {\mathcal M}, i}(x_{b_1}, \ldots, x_{b_t})$ if and only if $\DHyp_p^{ {\mathcal N}({\mathfrak E}, m, {\vec y}), i}(b_1, \ldots, b_t)$.
  
   By the definition of the structure  ${\mathcal N}({\mathfrak E}, m, {\vec y})$, $\DHyp_p^{ {\mathcal N}({\mathfrak E}, m, {\vec y}), i}(b_1, \ldots, b_t)$ if and only if  $\vec z \in E_p^i$, where  $\vec z = (z_B : B \in r([t]))$ is given by  $z_B = y_{ \{b_l : l \in B \} }$.
Recalling that we chose the sets $E_p^i$ such that  $\DHyp_p^{ {\mathcal N}_\infty, i}$ and $\phi^{-1} [E_p^i]$ differ by  a null set, routine calculation now shows that $F_1$ and $\phi^{-1}[F_2]$ differ by a null set, so that (since the lifted version of $\phi$ is measure preserving) $\Phi_{t_{\rm ind}}( {\mathcal M}) = \mu_{[k]}(F_1) = \lambda(F_2) = p$ as required.
\end{proof} 
   We make a  few concluding remarks:
\begin{itemize}
\item   Adapting proofs by Lov\'{a}sz and Szegedy \cite{LS04} for graph limits, one can show that in fact for any ${\mathcal L}$-limit $\mathfrak F$ the sequence 
$( \boldsymbol{\mathcal N}( {\mathfrak F}, j^2) )$ is convergent.

\item  From the results above, the reader who is familiar with flag algebras will  see that our notion of ${\mathcal L}$-limit  and Razborov's notion of flag algebra homomorphism (in the special case of the zero flag and the empty theory) are intertranslatable. One could see them as two ways of describing a notion of ``random sequence of ${\mathcal L}$-structures''.   

\item  It is easy to see that any quantity of the form $\Phi_{p/t/t_{\rm ind}}( {\mathcal M} )$ can be expressed as a suitable integral. In general these integral expressions are quite complicated, we will give a few simple examples in the Appendix.
\end{itemize}

\subsection{Hyperpartitions} \label{hypsec}
     We sketch some ideas of Elek and Szegedy from their work \cite{ES08b} on hypergraph limits. To motivate the definitions in this section, we consider a separable realisation $\phi : X_\infty^t \rightarrow [0,1]^{r([t])}$. Given $l > 0$, we divide the unit interval into $l$ subintervals of equal measure by defining $I^a_l = [\frac{a-1}{l}, \frac{a}{l})$ for $1 \le a < l$, $I^l_l = [\frac{l-1}{l}, 1]$.
    
   Given a tuple $\vec e  = (e_i : i \in I) \in [l]^I$ for some index set $I$, we let $\Cube(\vec e)$ be the ``hypercube'' $\prod_{i \in I} I^{e_i}_l \subseteq [0,1]^I$. One key point here is that any measurable subset of $[0,1]^I$ can be approximated arbitrarily well by sets which are unions of hypercubes: this plays an important role in the proofs of Removal and Regularity. Of course formally the definition of $\Cube(\vec e)$ depends on the value $l$, but in practice this should always be clear from the context. For every tuple $\vec x \in X_\infty^t$ there is a unique tuple $e(\vec x) \in [l]^{r[t]}$ such that $\phi(\vec x) \in \Cube(e(\vec x))$. In particular, there is a natural equivalence relation according to which tuples $\vec x, {\vec  x}' \in X_\infty^t$ are equivalent if and only if $e({\vec x}) = e({\vec x}')$.
  
     Recall now the description of $\phi$ in terms of the ``levels'' $\phi^j$ from subsection \ref{EStheory}. For each $j$ and $e$, let $H^j_e = \{ \{ x_1, \ldots, x_j \}  \in X_\infty^j : \phi^j(x_1, \ldots, x_j) \in I^e_l \}$, where we note that $H^j_e$ is a family of sets rather than tuples, and  the definition makes sense  because $\phi^j$ is invariant under permutation of its arguments. 
 
   It is now immediate from the definitions that:
\begin{itemize}

\item $e(\vec x)_A$ is the unique $e \in [l]$ such that $\{ x_j : j \in A \} \in H^{\vert A \vert}_e$.

\item If $\vec x$ and ${\vec x}'$ are $t$-tuples in $X_\infty^{\underline t}$, $\vec x$ and ${\vec x}'$ are equivalent (in the sense that $e(\vec x) = e({\vec x}')$) if and only if for all $A \in r([t])$, $\{ x_j : j \in A \}$ and $\{ x_j' : j \in A \}$ lie in the same cell of the partition of $X_\infty^{\vert A \vert}$ given by $\{ H^{\vert A \vert}_e: e \in [l] \}$.

\end{itemize}

    These considerations motivate the abstract definition of a {\em hyperpartition}, and the related notions of {\em equitability} and  {\em $\delta$-equitability}.  

\begin{definition}  Let $X$ be a set.
\begin{itemize}
\item  A {\em $(t, l)$-hyperpartition for $X$} is a family $\mathscr{H} = (H^j_e : j \in [t], e \in [l])$ such that for each $j \in [t]$, $(H^j_e : e \in [l])$ forms a partition
 of $X^j$. 
\item Given such a hyperpartition, we let $h^j_e = \{ (x_1, \ldots x_j) : \{ x_1, \ldots x_j \} \in H^j_e$.
\item  If $X$ is equipped with a measure $\nu$ and for each $j$ and $e$ the set $h^j_e$ is $\nu^j$-measurable (where $\nu^j$ is product measure on $X^j$), then $\mathscr{H}$ is {\em equitable} if and only if $\nu^j(h^j_e) = \nu^j(h^j_{e'})$ for all $e, e'$,  and {\em $\delta$-equitable} if and only if $\vert \nu^j(h^j_e) - \nu^j(h^j_{e'}) \vert < \delta$ for all $e, e'$. 
\end{itemize}
  \end{definition} 
  It should be clear that the family of partitions defined above from a separable realisation $\phi$ constitutes an equitable hyperpartition for $X_\infty$. In what follows we will denote this hyperpartition by $\mathscr{H}^\phi_\infty = (H^{\phi,j}_{\infty,e}: j \in [t], e \in [l])$.

    Let $\mathscr{H}$ be a  $(t, l)$-hyperpartition for some set $X$, let $u \le t$ and let $\vec x = (x_1, \ldots x_u) \in X^{\underline u}$. Then we define $e^{\mathscr{H}}(\vec x)  \in [l]^{r([u])}$ by setting $e^{\mathscr{H}}(\vec x)_A = b$ for the unique $b$ such that $\{ x_i : i \in A \} \in H^u_b$.
  We note that $(H^j_a: j \in [u], a \in [l])$ is a $(u, l)$-hyperpartition for $X$, and it suffices to determine the values of $e^{\mathscr{H}}$ on $u$-tuples; in the case of the hyperpartition $\mathscr{H}^\phi_\infty$, this $(u, l)$ hyperpartition coincides with the one defined from the lowered version of $\phi$. 
    Given $\vec e \in [l]^{r[u]}$, we further define $\Cell^{\mathscr{H}}(\vec e)$ to be the set of $u$-tuples $\vec x$ such that $e^{\mathscr{H}}(\vec x) = \vec e$. 
  Note that by definition, $\Cell^{\mathscr{H}^\phi_\infty}(\vec e) = \phi^{-1}[\Cube(\vec e)]$, and that $\Cell^{\mathscr{H}}(\vec e)$ is a $u$-uniform directed hypergraph.
  
  For use in the proofs of Removal and Regularity, we elaborate our discussion of $\mathscr{H}^\phi_\infty$ by relating it to the ultrapower construction. Each of the sets $h^j_{\infty, a}$ is measurable and so it differs on a set of measure zero from a set in $V^*$; we may therefore choose a new equitable hyperpartition $\mathscr{H}_\infty \in V^*$ such that $h^j_{\infty,e} \Delta h^{\phi, j}_{\infty,e}$ is null for all $e$ and $j$.
    We now choose $({\mathscr H}_k)$ so that ${\mathscr H}_\infty = [{\mathscr H}_k]$, and ${\mathscr H}_k$ is a $(t, l)$-hyperpartition on $X_k = \vert {\mathcal N}_k \vert$ for each $k$. For use in the proof of Regularity, we note that we may find numbers $\delta_k$ such that ${\mathscr H}_k$ is $\delta_k$-equitable and $\lim_{k \rightarrow U} \delta_k = 0$. 
  To lighten the notation we write $e_k$ for the function $e^{\mathscr{H}_k}$, and similarly $\Cell_k$ for $\Cell^{\mathscr{H}_k}$. It is routine to verify that $[\Cell_k(\vec e)] = \Cell^{\mathscr{H}_\infty }(\vec e)$, which differs on a null set from   $\Cell^{\mathscr{H}^\phi_\infty}(\vec e) = \phi^{-1}[\Cube(\vec e)]$.

\subsection{Removal}  \label{removalsection}
   Given a finite set $X$, we introduce a metric $d$ on the set of $\mathcal L$-structures with underlying set $X$. The key point is that $d ({\mathcal M}, {\mathcal N})$ measures the difference between the edge sets of the various directed hypergraphs comprising $\Code({\mathcal M})$ and $\Code({\mathcal N})$. 
 We start by defining $d_p^i( {\mathcal M}, {\mathcal N} )$ to be $\vert \DHyp_p^i( {\mathcal M} ) \Delta  \DHyp_p^i( {\mathcal N} ) \vert / \vert X \vert^{\Vert p \Vert}$ for all $(i, p) \in \Index$, then let $d( {\mathcal M}, {\mathcal N}) = \sum_{(i, p) \in \Index} d_p^i( {\mathcal M}, {\mathcal N} )$.
    We can now state and prove a version of the Strong Removal Theorem. 
\begin{theorem} \label{removal} 

 Let $\mathcal F$ be a (possibly infinite) set of finite ${\mathcal L}$-structures.
  For every $\epsilon > 0$ there exist $\delta > 0$ and $m$ such that for all sufficiently large finite ${\mathcal L}$-structures $\mathcal N$,  {\bf if} $p({\mathcal M}, {\mathcal N}) < \delta$ for all ${\mathcal M} \in {\mathcal F}$ with $\Vert {\mathcal M} \Vert \le m$, {\bf then} there is ${\mathcal N}'$ such that $\vert {\mathcal N} \vert = \vert {\mathcal N}' \vert$,   $d( {\mathcal N}, {\mathcal N}') < \epsilon$ and $p( {\mathcal M}, {\mathcal N}') = 0$ for all ${\mathcal M} \in {\mathcal F}$.

\end{theorem}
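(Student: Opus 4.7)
The plan is to argue by contradiction via the ultraproduct, closely following the Elek--Szegedy treatment of uniform hypergraphs and using the cellular rigidity of hyperpartition approximations.

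Suppose the theorem fails for some $\epsilon > 0$. Then we can find sequences $\delta_k \downarrow 0$, $m_k \uparrow \infty$ and finite ${\mathcal L}$-structures $({\mathcal N}_k)$ with $\Vert {\mathcal N}_k \Vert \to \infty$ such that $p({\mathcal M}, {\mathcal N}_k) < \delta_k$ for every $\mathcal M \in \mathcal F$ with $\Vert \mathcal M \Vert \le m_k$, yet no ${\mathcal N}'_k$ on the same underlying set satisfies both $d({\mathcal N}_k, {\mathcal N}'_k) < \epsilon$ and $p({\mathcal M}, {\mathcal N}'_k) = 0$ for every ${\mathcal M} \in {\mathcal F}$. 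Pass to a convergent subsequence (Bolzano--Weierstrass) and form the ultraproduct ${\mathcal N}_\infty$. By Claim \ref{claimone} together with $\delta_k \to 0$ and $m_k \to \infty$, we have $\mu(T_{\rm ind}({\mathcal M}, {\mathcal N}_\infty)) = 0$ for \emph{every} ${\mathcal M} \in {\mathcal F}$. Choosing a separable realisation $\phi$ adapted to the sets $\DHyp_p^{{\mathcal N}_\infty, i}$ produces a limit object $\mathfrak E = (E_p^i)$ with $\lambda(T_{\rm ind}({\mathcal M}, \mathfrak E)) = 0$ for all ${\mathcal M} \in {\mathcal F}$.

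Next, fix a large granularity $l$ and approximate each $E_p^i$, up to Lebesgue measure $\eta$, by a union $\tilde E_p^i$ of hypercubes of the hyperpartition $\mathscr{H}^{\phi}_\infty$; the resulting cellular structure $\tilde{\mathcal N}_\infty$ satisfies $d({\mathcal N}_\infty, \tilde{\mathcal N}_\infty) < \epsilon/2$. The key cellular rigidity is that in $\tilde{\mathcal N}_\infty$, whether an $m'$-tuple is an embedding of any fixed $\mathcal M$ depends only on its cell-type vector under $\mathscr{H}^{\phi}_\infty$; consequently $T_{\rm ind}({\mathcal M}, \tilde{\mathcal N}_\infty)$ decomposes into cell blocks of Lebesgue measure at least $l^{-(2^{m'}-1)}$. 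Choosing $\eta$ small enough compared with the minimum block size up to size $m$ (where $m$ will be the bound produced by the theorem) guarantees by a volume comparison that \emph{no} block for any $\mathcal M \in \mathcal F$ with $\Vert \mathcal M \Vert \le m$ can appear in $\tilde{\mathcal N}_\infty$ at all. For any remaining bad cell type at higher arity (indexed by a finite cell-label tuple of arity $\le r_{\rm max}$) we excise the corresponding hypercubes from the $\tilde E_p^i$. Because the cell types at arity $\le r_{\rm max}$ are finite in number and each excision has measure at most the hypercube size, the total additional $d$-cost stays below $\epsilon/2$. This produces $\mathfrak E'$ and ${\mathcal N}'_\infty \in V^*$ with $d({\mathcal N}_\infty, {\mathcal N}'_\infty) < \epsilon$ and $T_{\rm ind}({\mathcal M}, {\mathcal N}'_\infty) = \emptyset$ (literally, not merely null) for every ${\mathcal M} \in {\mathcal F}$.

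Finally, write ${\mathcal N}'_\infty = [{\mathcal N}'_k]$ and $\mathscr{H}_\infty = [\mathscr{H}_k]$. The cell-type data of ${\mathcal N}'_\infty$ under $\mathscr{H}_\infty$ is a single finite combinatorial object, so {\L}o{\'s}' Theorem transfers the entire object to the ${\mathcal N}'_k$ for $U$-almost all $k$, and similarly the first-order condition $d({\mathcal N}_k, {\mathcal N}'_k) < \epsilon$ transfers. In a cellular structure with equitable cells of size $\Vert {\mathcal N}_k \Vert / l$, embeddability of any fixed $\mathcal M$ is (for large enough $k$) determined purely by the cell-type data; hence the absence of embeddings in ${\mathcal N}'_\infty$ forces the absence in ${\mathcal N}'_k$ \emph{simultaneously for all $\mathcal M \in \mathcal F$}. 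Choosing any such sufficiently large $k$ contradicts the counterexample assumption. The principal obstacle --- and the reason the cellular detour is essential --- is that a naive application of {\L}o{\'s} one $\mathcal M$ at a time yields only a set $A_{\mathcal M} \in U$ per $\mathcal M$, and a countable intersection $\bigcap_{\mathcal M \in \mathcal F} A_{\mathcal M}$ may well be empty for non-principal $U$; routing the transfer through the single finite cell-type table bypasses this difficulty by encoding all countably many avoidance conditions at once.
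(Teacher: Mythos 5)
Your setup (argue by contradiction, pass to a convergent subsequence, form the ultraproduct, take a separable realisation, observe that $T_{\rm ind}({\mathcal M},{\mathfrak E})$ is null for every ${\mathcal M}\in{\mathcal F}$, and approximate the $E^i_p$ by unions of grid hypercubes) matches the paper's, but the core of your argument --- the ``cellular rigidity'' volume comparison --- has a genuine gap. You want $T_{\rm ind}({\mathcal M},\tilde{\mathcal N}_\infty)$ to be literally empty because it is a union of blocks of measure at least $l^{-N}$ while its total measure is $O(\eta)$. But $\eta$ is the error of approximating the fixed measurable sets $E^i_p$ by unions of hypercubes of side $1/l$: making $\eta$ small forces $l$ large, which shrinks the block volume $l^{-N}$, and there is no reason the achievable approximation error at granularity $l$ ever drops below $l^{-N}$ (for a general measurable set the grid-approximation error can decay arbitrarily slowly in $l$). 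Indeed the deterministic cellular structure typically \emph{does} contain induced copies of the forbidden structures --- that is the whole difficulty of removal. Your fallback ``excision'' step does not repair this: deleting hypercubes from the $\tilde E^i_p$ to destroy one bad cell type can create new \emph{induced} copies of other members of ${\mathcal F}$ (since $p$ is induced density, removing tuples from a relation is not monotone for avoidance); the bound ``each excision has measure at most the hypercube size'' allows a total cost of up to $1$ rather than $\epsilon/2$; and the block volumes tend to $0$ as $\Vert{\mathcal M}\Vert$ grows, so no single choice of $l$ and $\eta$ handles all of the countably many structures in ${\mathcal F}$.

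The paper closes exactly this gap by \emph{hyperpartition sampling}: after building the cellular structure ${\mathcal N}^*_k$ (your $\tilde{\mathcal N}$), it defines a random structure $\boldsymbol{\mathcal N}_k$ by resampling the real coordinates uniformly \emph{within} the hypercube $\Cube(\vec c)$ determined by the cell data of ${\mathscr H}_k$. Since each $T_{\rm ind}({\mathcal M},{\mathfrak E})$ is Lebesgue-null and ${\mathcal F}$ is countable, $\boldsymbol{\mathcal N}_k$ almost surely omits every ${\mathcal M}\in{\mathcal F}$ --- which also disposes of the {\L}o{\'s}-transfer worry you raise at the end, because the avoidance is achieved at the finite level for each $k$ rather than transferred from the ultraproduct --- while a direct computation gives $\lim_{k\rightarrow U}{\mathbb E}\,d(\boldsymbol{\mathcal N}_k,{\mathcal N}^*_k)<\epsilon/2$; any realisation in the intersection of the two events yields the required ${\mathcal N}^\dag_k$. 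You would need to incorporate this resampling device (or a genuine counting/regularity lemma) to make your cellular step work.
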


\begin{proof}  If not, then we fix an $\epsilon > 0$ for which the claim of the theorem fails.
  We use this to construct an increasing sequence $({\mathcal N}_k)$ of finite structures  such that for all $k$:

\begin{itemize}

\item $p({\mathcal M}, {\mathcal N}_k) < 1/k$ for all ${\mathcal M} \in {\mathcal F}$ with $\Vert {\mathcal M} \Vert \le k$.

\item  There is no ${\mathcal N}'$ such that
  $\vert {\mathcal N}_k \vert = \vert {\mathcal N}' \vert$,   $d( {\mathcal N}_k, {\mathcal N}') < \epsilon$
  and $p( {\mathcal M}, {\mathcal N}') = 0$ for all ${\mathcal M} \in {\mathcal F}$.

\end{itemize} 
   The strategy of the proof is as follows: we will produce for some large $k$, structures ${\mathcal N}^*_k$ and ${\mathcal N}^\dag_k$ such that $d( {\mathcal N}_k, {\mathcal N}^*_k) < \epsilon/2$, $d( {\mathcal N}^*_k, {\mathcal N}^\dag_k) < \epsilon/2$, and $p({\mathcal M}, {\mathcal N}^\dag_k) = 0$ for all ${\mathcal M} \in {\mathcal F}$. This will give an immediate contradiction.
   Thinning it out  if necessary, we may assume that the sequence $({\mathcal N}_k)$ is convergent and may form the associated function $\Phi_p$. By construction, 
$\Phi_p({\mathcal M} ) = 0$ for all ${\mathcal M} \in F$. 
    We now form the ultrapower ${\mathcal N}_\infty$, and define a separable realisation $\phi$ and ${\mathcal L}$-limit $\mathfrak E$ as in subsection \ref{limsec}.
    Since $\Phi_p( {\mathcal M} ) = 0$ for all ${\mathcal M} \in F$, it follows (as in the proof of Theorem \ref{limittheorem}) that $T_{\rm ind}({\mathcal M}, {\mathfrak E})$ is a null set for all ${\mathcal M} \in F$.
    We recall that $\mathfrak E$ is a family of Lebesgue measurable sets $E_p^i$, where $p$ is a partition of $[r_i]$ into some number  $t = \Vert p \Vert$ of pieces and $E_p^i \subseteq [0,1]^t$.  We choose $\epsilon' > 0$ small enough that $\vert  \Index \vert \epsilon' < \epsilon/2$, and then choose an integer $l$ so large that every set $E_p^i$ can be approximated with error $\epsilon'$ by a union of hypercubes which are products of intervals of the form $I^e_l$: that is for every $i$ and $p$ there is a set  $X_p^i \subseteq [l]^{r([\Vert p \Vert])}$ such that if we set $E_p^{*, i} = \bigcup_{\vec e \in X_p^i} \Cube(\vec e)$ then $\lambda(E_p^i \Delta E_p^{*, i}) < \epsilon'$. 

    With these definitions in place, we can now define $(r_{\rm max}, l)$-hyperpartitions $\mathscr{H}_\infty^\phi$, $\mathscr{H}_\infty$ and $\mathscr{H}_k$ exactly as in subsection \ref{hypsec}.
  Let $l_k = \Vert {\mathcal N}_k \Vert$, so that we may assume $\vert {\mathcal N}_k \vert = [l_k]$. We  use the hyperpartition $\mathscr{H}_k$ and the sets $X_p^i$ to define a structure ${\mathcal N}^*_k$ with $\vert {\mathcal N}^*_k \vert = \vert {\mathcal N}_k \vert$.
 
  For each $i \in [n]$ and each partition $p$ of $[r_i]$, we  use $\mathscr{H}_k$ to partition $\Vert {\mathcal N}_k \Vert^{\Vert p \Vert}$ into cells of the form $Cell_k(\vec e)$ for $e \in [l]^{r[t]}$. Then we form sets $\DHyp^{*, i}_{k,p} = \bigcup_{\vec e \in X_p^i} \Cell_k(\vec e)$, and finally we decode the resulting system of directed hypergraphs  by setting ${\mathcal N}^*_k = \Decode(\DHyp^{*, i}_{k,p} : (i, p) \in \Index)$.

    By a routine application of {\L}o{\'s}' theorem, the ultraproduct ${\mathcal N}^*_\infty$ is the result of decoding the  hypergraphs obtained in a similar way from ${\mathcal H}_\infty$.  Chasing through the definitions, this means that for each $i$ and $p$, the directed hypergraph $\DHyp^{ {\mathcal N}^*_\infty, i}_p$ is $\bigcup_{\vec e \in X_p^i} \Cell^{ {\mathscr H}_\infty }(\vec e)$.
  Recalling  that $\phi$ is measure preserving, and $\lambda(E_p^i \Delta E_p^{*,i}) < \epsilon'$, we see that $\mu( \phi^{-1}[E_p^i] \Delta \phi^{-1}[E_p^{*,i}]) < \epsilon'$.
  Since $\phi^{-1}(E_p^i) \Delta \DHyp_p^{ {\mathcal N}_\infty, i}$ is null, $\phi^{-1}[E_p^{*, i}] = \bigcup_{\vec e \in X_p^i} \Cell^{ \mathscr{H}^\phi_\infty}(\vec e)$, and  $\Cell^{ {\mathscr H}_\infty }(\vec e) \Delta \Cell^{ {\mathscr H}^\phi_\infty }(\vec e)$ is null,  we see that 
\[
   \mu (\DHyp^{ {\mathcal N}^*_\infty, i}_p \Delta \DHyp^{ {\mathcal N}_\infty, i}_p) < \epsilon'
\]
for all $i$ and $p$, so that easily $d( {\mathcal N}^*_k, {\mathcal N}_k) < \epsilon/2$ for $U$-almost every $k$.  
  
    Both $\mathfrak E$ and ${\mathfrak E}^*$ are ${\mathcal L}$-limits, but (following an idea of Elek and Szegedy which they dub ``hyperpartition sampling'') we will use them in a slightly different way from that described in subsection \ref{limsec} to define random structures $\boldsymbol{\mathcal N}_k$ and $\boldsymbol{\mathcal N}^*_k$.
  Before making the definitions we warn the reader that $\boldsymbol{\mathcal N}^*_k$ is a rather trivial random structure in that (although we will shortly give a more complicated definition) it is always equal to ${\mathcal N}^*_k$; the point in defining $\boldsymbol{\mathcal N}^*_k$ in this seemingly obtuse way is to facilitate comparison with $\boldsymbol{\mathcal N}_k$.
    We define $\boldsymbol{\mathcal N}_k$ using the same function $\vec y \mapsto {\mathcal N}( {\mathfrak E}, l_k, \vec y)$  that was used in the definition of $\boldsymbol{\mathcal N}( {\mathfrak E}, l_k)$  from subsection \ref{limsec}; the key difference is that we choose values of $\vec y$ uniformly from  the hypercube  $\Cube(\vec c) \subseteq [0,1]^{ r([l_k], r_{\rm max}) } $, where  $\vec c = (c_A : A \in r([l_k], r_{\rm max}) )$ and $c_A$ is defined to be the unique  $j \in [l]$ such that
  $A \in H^{\vert A \vert}_{k, j}$.
  Similarly we define $\boldsymbol{\mathcal N}^*_k$ to  be the value of ${\mathcal N}( {\mathfrak E}^*, l_k, \vec y)$ where again $\vec y$ is chosen uniformly at random from $\Cube(\vec c)$.
   Since  $T_{\rm ind}({\mathcal M}, {\mathfrak E})$ is a null set for all ${\mathcal M} \in {\mathcal F}$ and $\mathcal F$ is countable, it is easy to see that almost surely $p({\mathcal M}, \boldsymbol{\mathcal N}_k) = 0$ for all ${\mathcal M} \in {\mathcal F}$. 
    We claim that ${\mathcal N}( {\mathfrak E}^*, l_k, \vec y) = {\mathcal N}^*_k$ for all $\vec y \in \Cube(\vec c)$, which we will verify by inspecting the computation of the associated directed hypergraphs. Let us fix $\vec y \in \Cube(\vec c)$, $(i, p) \in \Index$ with $\Vert p \Vert = t$ say, and $(a_1, \ldots, a_t) \in [l_k]^{\underline t}$: then by definition $(a_1, \ldots, a_t) \in \DHyp_p^{{\mathcal N}( {\mathfrak E}^*, l_k, \vec y), i}$ if and only if $(y_{ \{ a_i : i \in B \} } : B \in r([t]) ) \in E^{*, p}_i$ if and only if $e_k(a_1, \ldots, a_t) = (c_{ \{ a_i : i \in B \} } : B \in r([t]) ) \in X^p_i$ if and only if $(a_1, \ldots, a_t) \in \DHyp_p^{ {\mathcal N}^*_k, i}$.
   Next we claim that $\lim_{k \rightarrow U} {\mathbb E}(d(\boldsymbol{\mathcal N}_k, \boldsymbol{\mathcal N}^*_k)) < \epsilon/2$. 
 Fix $k$, and    $(i, p) \in \Index$ and let $t = \Vert p \Vert$. For each $\vec a = (a_1, \ldots, a_t) \in [l_k]^{\underline t}$, let $\boldsymbol{Y}(\vec a)$ be the indicator function of the event ``$\vec a \in \DHyp_p^{\boldsymbol{\mathcal N}_k, i} \Delta \DHyp_p^{\boldsymbol{\mathcal N}^*_k, i}$''.
 The expected value of $\boldsymbol{Y}(\vec a)$ is the probability  that the tuple  $(y_{ \{ a_i : i \in B \} } : B \in r([t]) )$ lies in $E^p_i \Delta E^{*, p}_i$; since $y_{ \{ a_i : i \in B \} }$ is chosen uniformly from the interval $I^{y_{ \{ a_i : i \in B \} } }_l$, we see easily that  $${\mathbb E}( \boldsymbol{Y}(\vec a) )  = l^{2^t - 1} \lambda(E^p_i \Delta E^{*, p}_i \cap B(\vec a) ),$$ where $B(\vec a) = \Cube(c_{ \{ a_i : i \in A \} } : A \in r([t]) )$.  
So 
\begin{eqnarray*} 
\mathbb E(\vert \DHyp_p^{\boldsymbol{\mathcal N}_k, i} \Delta \DHyp_p^{\boldsymbol{\mathcal N}^*_k,i} \vert/l_k^{2^t -1}) 
 &  =  &  \sum_{\vec a \in [l_k]^{\underline t} }  \lambda(E^p_i \Delta E^{*, p}_i \cap B(\vec a) ) \\
{} & =  & \sum_{\vec e \in [l]^{r[t]}} \vert \Cell_k(\vec e) \vert (l/l_k)^{2^t -1} \lambda(E^p_i \Delta E^{*, p}_i \cap \Cube(\vec e) ).\\
\end{eqnarray*}

 Since the sequence $( \Cell_k(\vec e) )$ represents a set of measure $l^{1 - 2^t}$, it follows that $\lim_{k \rightarrow U} \vert \Cell_k(\vec e) \vert (l/l_k)^{2^t -1} = 1$. Hence 
\begin{eqnarray*}
 \lim_{k \rightarrow U} {\mathbb E}(\vert \DHyp_p^{\boldsymbol{\mathcal N}_k, i} \Delta \DHyp_p^{\boldsymbol{\mathcal N}^*_k,i} \vert/l_k^{2^t -1}) 
 & = & \sum_{\vec b \in [l]^{r[t]}}  \lambda(E^p_i \Delta E^{*,p}_i \cap \Cube(\vec b) ) \\ {} &  = & \lambda(E^p_i \Delta E^{*,p}_i) < \epsilon'. 
\end{eqnarray*}
   By the choice of $\epsilon'$, it follows that $\lim_{k \rightarrow U} {\mathbb E}(d(\boldsymbol{\mathcal N}_k, \boldsymbol{\mathcal N}^*_k)) < \epsilon/2$. 

 Since  almost surely  $p(M, \boldsymbol{\mathcal N}_k) = 0$ for all $M \in F$ and all $k$, it follows from the claim of the last paragraph that for $U$-many $k$ there is a structure ${\mathcal N}^\dag_k$ such that $d( {\mathcal N}^\dag_k, {\mathcal N}^*_k) < \epsilon/2$ and $p(M, {\mathcal N}^\dag_k) = 0$ for all $M \in F$. Since  $d( {\mathcal N}^*_k, {\mathcal N}_k) < \epsilon/2$ for $U$-many $k$, and $d$ is a metric, this implies that there is  some $k$ such that $d( {\mathcal N}^\dag_k, {\mathcal N}_k) < \epsilon$
 and $p(M, {\mathcal N}^\dag_k) = 0$ for all $M \in F$. This is a contradiction.
\end{proof}

 It is easy to see that Theorem \ref{removal} implies the form of removal with one  omitted structure and homomorphic images:
\begin{theorem} Let $\mathcal M$ be a finite $\mathcal L$-structure.
  For every $\epsilon > 0$ there exists $\delta > 0$ such that for all large ${\mathcal L}$-structures $\mathcal N$,  {\bf if} $t({\mathcal M}, {\mathcal N}) < \delta$ {\bf then} there is ${\mathcal N}'$ such that  $\vert {\mathcal N} \vert = \vert {\mathcal N}' \vert$,   $d( {\mathcal N}, {\mathcal N}') < \epsilon$  and $p( {\mathcal M}, {\mathcal N}') = 0$.
\end{theorem}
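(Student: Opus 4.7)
The plan is to reduce the statement to Theorem \ref{removal} applied to the singleton family $\mathcal F = \{ \mathcal M \}$. Theorem \ref{removal} is formulated in terms of the induced substructure density $p$, whereas the hypothesis here is in terms of the homomorphism density $t$, so the only real content is a quantitative passage from small $t$ to small $p$.

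First I would relate $p(\mathcal M, \mathcal N)$ to $t(\mathcal M, \mathcal N)$. Since every embedding of $\mathcal M$ into $\mathcal N$ is in particular an injective homomorphism, we have $t_{\rm ind}(\mathcal M, \mathcal N) \le t_0(\mathcal M, \mathcal N)$, and by the paper's earlier remark $\vert t(\mathcal M, \mathcal N) - t_0(\mathcal M, \mathcal N) \vert = O(1/\Vert \mathcal N \Vert)$. Combining this with Fact \ref{ptfact}, namely $t_{\rm ind}(\mathcal M, \mathcal N) = p(\mathcal M, \mathcal N) \cdot t_{\rm ind}(\mathcal M, \mathcal M)$, and noting that $t_{\rm ind}(\mathcal M, \mathcal M) > 0$ because the identity map is always an automorphism of $\mathcal M$, I obtain for all sufficiently large $\mathcal N$ the inequality
\[
p(\mathcal M, \mathcal N) \;\le\; \frac{2 \, t(\mathcal M, \mathcal N)}{t_{\rm ind}(\mathcal M, \mathcal M)}.
\]

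Given $\epsilon > 0$, I would then apply Theorem \ref{removal} to the family $\mathcal F = \{\mathcal M\}$ and the given $\epsilon$ to obtain constants $\delta_1 > 0$ and $m$. Setting $\delta = \delta_1 \cdot t_{\rm ind}(\mathcal M, \mathcal M)/2$, the displayed inequality ensures that $t(\mathcal M, \mathcal N) < \delta$ forces $p(\mathcal M, \mathcal N) < \delta_1$ once $\mathcal N$ is large enough; Strong Removal then produces the required $\mathcal N'$. (In the degenerate case where the value of $m$ returned by Theorem \ref{removal} is smaller than $\Vert \mathcal M \Vert$, the hypothesis of Strong Removal is vacuous and the conclusion holds without any assumption on $t$.)

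There is no real obstacle to this argument: everything reduces to a short density comparison plus the already-established Theorem \ref{removal}. The label \emph{``homomorphic images''} in the informal description reflects the fact that $t$ counts all homomorphisms, including those which factor through proper quotients of $\mathcal M$; consequently, if one applied the same reduction to the family of all surjective homomorphic images of $\mathcal M$ rather than just to $\{ \mathcal M \}$, one would obtain the formally stronger conclusion that $\mathcal N'$ admits no homomorphic image of $\mathcal M$ at all.
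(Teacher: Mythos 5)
Your reduction to Theorem \ref{removal} with ${\mathcal F} = \{ {\mathcal M} \}$, via the comparison $p({\mathcal M},{\mathcal N})\, t_{\rm ind}({\mathcal M},{\mathcal M}) = t_{\rm ind}({\mathcal M},{\mathcal N}) \le t_0({\mathcal M},{\mathcal N})$ and the observation $t_{\rm ind}({\mathcal M},{\mathcal M})>0$, is correct and is precisely the argument the paper leaves unwritten behind ``it is easy to see.'' One small point of hygiene: your displayed inequality $p \le 2t/t_{\rm ind}({\mathcal M},{\mathcal M})$ does not literally follow from the additive estimate $\vert t - t_0\vert = O(\Vert{\mathcal N}\Vert^{-1})$ when $t$ is itself of order $\Vert{\mathcal N}\Vert^{-1}$ or smaller; either use the multiplicative comparison $t_0({\mathcal M},{\mathcal N}) \cdot \Vert{\mathcal N}\Vert^{\underline m}/\Vert{\mathcal N}\Vert^{m} \le t({\mathcal M},{\mathcal N})$, which gives $t_0 \le 2t$ outright for large ${\mathcal N}$, or note that you only need the implication after $\delta$ has been fixed, at which point the $O(\Vert{\mathcal N}\Vert^{-1})$ error can be absorbed. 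Either way the proof of the stated theorem is sound, including your treatment of the degenerate case $m < \Vert{\mathcal M}\Vert$.

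Your closing aside, however, is false. Applying the same reduction to the family of all surjective homomorphic images of ${\mathcal M}$ does not go through, because smallness of $t({\mathcal M},{\mathcal N})$ does not force smallness of $p({\mathcal K},{\mathcal N})$ for a \emph{proper} quotient ${\mathcal K}$ of ${\mathcal M}$: a copy of ${\mathcal K}$ sits on fewer than $\Vert{\mathcal M}\Vert$ vertices, so many induced copies of ${\mathcal K}$ contribute only a vanishing proportion of the $\Vert{\mathcal N}\Vert^{\Vert{\mathcal M}\Vert}$ maps counted by $t$. Concretely, take ${\mathcal L}$ with one binary relation, ${\mathcal M}$ two points with a single directed edge, and ${\mathcal N}$ the structure on $[k]$ with a loop at every point and no other relations. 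Then $t({\mathcal M},{\mathcal N}) = 1/k \to 0$, yet the one-point quotient of ${\mathcal M}$ (a single looped vertex) has induced density $1$ in ${\mathcal N}$, and no ${\mathcal N}'$ with $d({\mathcal N},{\mathcal N}') < 1$ can avoid all homomorphic images of ${\mathcal M}$. So the ``formally stronger conclusion'' is not merely unproved by your method; it is false. The phrase ``homomorphic images'' in the theorem refers only to the hypothesis being stated in terms of the homomorphism density $t$, not to any strengthening of the conclusion.
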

  Theorem \ref{removal} can also be applied to prove testability for hereditary properties of structures, using standard arguments.

\subsection{Regularity} \label{regularity} 
\begin{definition}

A \emph{cylindric intersection set} $L$ in $X^{A}$ is a set $L = \bigcap_{\{B : B \subsetneq A\}} \pi_B^{-1}(Y_B)$, where each $Y_B \subseteq X^B$ is measurable.

\end{definition}

If ${L}$ is a cylindric intersection in the ultraproduct $X_{\infty}^{[r]}$ then ${L} \in \sigma_{[r]}^*$.

In the case that $r=2$, every cylindric intersection in $X^{[2]}$ is a measurable rectangle, whether X is a finite set or the ultraproduct.
\begin{definition}

\label{epsilonregular}

A directed $r$-uniform hypergraph $H$ on X is \emph{$\epsilon$-regular} if for any cylindric intersection set L in $X^{[r]}$ with $\mu(L) \geq \epsilon$, $ | \mu(H \cap L) - \mu(H)\mu(L) | < \epsilon$. 

\end{definition}
If $H$ is $\epsilon$-regular for all $\epsilon > 0$, then $\mu(H \cap L) = \mu(H)\mu(L)$ for any cylindric intersection $L$. \\
We can now state and prove a version of the Regularity Lemma.

\begin{theorem} \label{regularitylemma} Given $\epsilon > 0$ and $d \in \mathbb{N}$, there exist $D, N \in \mathbb{N}$ such that for any $\mathcal{L}$-structure ${\mathcal M}$ with $\Vert {\mathcal M} \Vert \geq  N$, there exists an $\epsilon$-equitable $(r_{\rm max}, l)$-hyperpartition ${\mathscr H}$ on $\vert {\mathcal M} \vert$ such that 

\begin{itemize}

\item $d \leq l \leq D$

\item Each $h^j_e$ is $\epsilon$-regular.

\item For each $(i, p) \in \Index$, there exist $m$ and tuples $\vec{e}_1, \ldots, \vec{e}_{m}$ such that  $\mu(\DHyp_p^{\mathcal{M},i} \Delta \bigcup_{i = 1}^{m} \Cell^{\mathscr{H}}(\vec{e}_i)) < \epsilon$.
\end{itemize}
\end{theorem}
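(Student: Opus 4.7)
The plan is to argue by contradiction, mirroring the ultraproduct strategy used in Theorem \ref{removal}. Suppose the conclusion fails for some pair $(\epsilon, d)$. Then for every $D$ there is a structure $\mathcal{M}_D$ with $\Vert \mathcal{M}_D \Vert \geq D$ admitting no $\epsilon$-equitable $(r_{\max}, l)$-hyperpartition with $d \leq l \leq D$ satisfying the three bullets. Pick such $\mathcal{M}_D$ for each $D$, thin to a convergent subsequence $(\mathcal{M}_k)$, and form $\mathcal{M}_\infty = \prod_k \mathcal{M}_k / U$. I would then invoke Facts \ref{systemfact} and \ref{liftingfact} to obtain a separable system $\{l_A : A \in r([r_{\max}])\}$ suitable for the hypergraphs $\DHyp^{\mathcal{M}_\infty, i}_p$, together with a realization $\phi : X_\infty^{r_{\max}} \to [0,1]^{r([r_{\max}])}$ and measurable sets $E^i_p$ such that $\DHyp^{\mathcal{M}_\infty, i}_p \Delta \phi^{-1}[E^i_p]$ is null.

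Next I would fix $l \geq d$ large enough that each $E^i_p$ is approximated by a union of hypercubes of side $1/l$ to within $\epsilon/2$; this choice of $l$ will determine the eventual $D$ once the contradiction closes. Define the hyperpartition $\mathscr{H}^\phi_\infty$ on $X_\infty$ by $h^{\phi, j}_{\infty, e} = (\phi^j)^{-1}[I^e_l]$. Symmetry of $\phi^j$ makes this a bona fide hyperpartition, the measure-preservation of $\phi$ makes it exactly equitable, and clause (2) of Definition \ref{realisationfact} places each $h^{\phi, j}_{\infty, e}$ in $l_{[j]}$. Following the recipe in subsection \ref{hypsec}, replace the cells by internal approximants to get $\mathscr{H}_\infty = [\mathscr{H}_k]$, with $\mathscr{H}_k$ being $\delta_k$-equitable and $\lim_{k \to U} \delta_k = 0$, so that $\mathscr{H}_k$ is $\epsilon$-equitable for $U$-almost every $k$. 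The cell-approximation bullet, valid at the infinite level by the choice of $l$ and the null-set equivalence $\DHyp^{\mathcal{M}_\infty, i}_p \Delta \phi^{-1}[E^i_p]$, transfers via {\L}o{\'s}' theorem to $U$-almost every $\mathcal{M}_k$.

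The decisive clause is $\epsilon$-regularity. Since $h^{\phi, j}_{\infty, e} \in l_{[j]}$ and $l_{[j]}$ is independent of $\sigma^*_{[j]}$, the identity $\mu(h^{\phi, j}_{\infty, e} \cap L) = \mu(h^{\phi, j}_{\infty, e}) \mu(L)$ holds \emph{exactly} for every cylindric intersection $L \in \sigma^*_{[j]}$, and therefore also for $h^j_{\infty, e}$, which differs from $h^{\phi, j}_{\infty, e}$ by a null set. To pass this back to finite levels, suppose $h^j_{k, e}$ fails $\epsilon$-regularity for a $U$-large set of $k$. Each failure is witnessed by a cylindric intersection $L_k = \bigcap_{B \subsetneq [j]} \pi_B^{-1}(Y^k_B) \subseteq \vert \mathcal{M}_k \vert^{[j]}$ with normalised counting density at least $\epsilon$ and deviation at least $\epsilon$. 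Then $L_\infty = [L_k]$ is an internal cylindric intersection lying in $\sigma^*_{[j]}$, has Loeb measure at least $\epsilon$, and inherits a deviation of at least $\epsilon$ in the standard part, contradicting exact independence. Hence for $U$-many $k$ the hyperpartition $\mathscr{H}_k$ satisfies all three bullets, contradicting the choice of $\mathcal{M}_k$.

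The hard part will be this regularity step: equitability and the cell approximation are essentially book-keeping once $\mathscr{H}^\phi_\infty$ is in hand, but regularity relies on the less obvious fact emphasised just after the definition of $\sigma^*_A$, namely that the separable algebra $l_{[j]}$ can be chosen independent of $\sigma^*_{[j]}$. The subtlety is recognising that the ultraproduct of arbitrary cylindric intersections from the finite layers still lives in $\sigma^*_{[j]}$ at the Loeb level, so that the exact independence at the ultraproduct can be turned into a quantitative contradiction downstairs; this is precisely where the Elek--Szegedy machinery is genuinely used.
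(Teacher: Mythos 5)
Your proposal is correct and follows essentially the same route as the paper: contradiction via a convergent sequence of counterexamples, passage to the ultraproduct, the hyperpartitions $\mathscr{H}^\phi_\infty$, $\mathscr{H}_\infty$, $\mathscr{H}_k$ from the Removal argument, exact regularity at the infinite level from the independence of $l_{[j]}$ and $\sigma^*_{[j]}$, and transfer of a putative family of witnessing cylindric intersections $L_k$ to $L_\infty=[L_k]\in\sigma^*_{[j]}$ to contradict that independence. If anything, your write-up is more careful than the paper's sketch about the $\epsilon/2$-approximation of the sets $E^i_p$ by unions of hypercubes and about the pigeonhole selection of a single failing pair $(j,e)$.
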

\begin{proof}

If not, then we fix an $\epsilon > 0$ for which the claim of the theorem fails.
  We use this to construct an increasing sequence $({\mathcal N}_k)$ of finite structures such that for all $k$, there is no $\epsilon$-equitable $(r_{\rm max}, l)$-hyperpartition, for $d \leq \ell \leq k$ satisfying above conditions for any ${\mathcal N}_k$.
  
 We now form the ultrapower ${\mathcal N}_\infty$ and define $(r_{\rm max}, l)$-hyperpartitions $\mathscr{H}_\infty^\phi$, $\mathscr{H}_\infty$ and $\mathscr{H}_k$ as in the proof of the Removal Lemma.
  
Each $h_{\infty, e}^j$ is independent of every set $L \in \sigma_{[j]}^*$, that is, $\mu(h_{\infty, e}^j \cap L) = \mu(h_{\infty, e}^j) \mu(L)$.   Then $\mathscr{H}_\infty^\phi$ and  $\mathscr{H}_\infty$ are $\epsilon$-regular as well as equitable for all $\epsilon > 0$.
 Also $\mathscr{H}_\infty$ has the property that for each
$(i,p)\in \Index$, $\DHyp_p^{\mathcal{N},i} = \bigcup_{i = 1}^{m} \Cell^{\mathscr{H}_\infty}(\vec{e}_i)$ for some $m$ and tuples $\vec{e}_1, \ldots, \vec{e}_{m}$.
For $U$-almost every $k$, $\mathscr{H}_k$ is an $\epsilon$-equitable $(r_{\rm max}, l)$-hyperpartition and for each $(i,p)\in \Index$, $\DHyp_p^{\mathcal{N}_k,i} = \bigcup_{i = 1}^{m} \Cell_k(\vec{e}_i)$ for some $m$ and tuples $\vec{e}_1, \ldots, \vec{e}_{m}$. 
By our assumption there must exist $e$ and $j$ such that for $U$-almost every $k$, $h_{e,k}^j$ is not $\epsilon$-regular. Let $(L_k)$ be the sequence of cylindrical intersection sets that witness this fact. Let $L = [L_k] \in \sigma_[j]C^*$. Our assumption implies $h_{e, \infty}^j$ and $L$ are not independent, which is a contradiction.
\end{proof}

\section{Models of a general universal theory} \label{withT}

Let $T$ be a universal theory in the language $\mathcal L$. We need a technical result describing the models of $T$.
\begin{lemma} \label{forbidlemma}  There is a set ${\rm Forbid}^T$ of isomorphism types of finite $\mathcal L$-structures such that for any $\mathcal L$-structure $\mathcal M$, ${\mathcal M} \models T$ if and only if no finite induced substructure of $\mathcal M$ has isomorphism type in ${\rm  Forbid}^T$.
\end{lemma}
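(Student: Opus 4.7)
The plan is to take ${\rm Forbid}^T$ to be exactly the set of isomorphism types of finite $\mathcal L$-structures that fail to be models of $T$. With this choice, the two directions of the biconditional come from (a) the standard fact that models of a universal theory are closed under induced substructures, and (b) the fact that a failure of a universal sentence is witnessed in a finite substructure.

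For the forward direction, suppose ${\mathcal M} \models T$ and let $A \subseteq \vert {\mathcal M} \vert$ be finite. The paper already notes (in subsection \ref{strux}) that induced substructures of models of a universal theory are again models of that theory; so ${\mathcal M}\vert_A \models T$ and therefore the isomorphism type of ${\mathcal M}\vert_A$ is not in ${\rm Forbid}^T$. For the reverse direction, suppose ${\mathcal M} \not\models T$. Fix a sentence $\phi = \forall x_1 \ldots \forall x_s \, \psi(x_1,\ldots,x_s) \in T$ (with $\psi$ quantifier-free) that fails in $\mathcal M$, and choose witnesses $a_1,\ldots,a_s \in \vert {\mathcal M} \vert$ with ${\mathcal M} \models \neg\psi(a_1,\ldots,a_s)$. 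Let $A = \{a_1,\ldots,a_s\}$, a finite subset of $\vert {\mathcal M} \vert$.

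The key small lemma I would invoke (a standard induction on formula complexity, with the relation symbols handled by the definition of ${\mathcal M}\vert_A$) is that for any quantifier-free formula $\chi(\vec y)$ and any tuple $\vec b$ from $A$, ${\mathcal M}\vert_A \models \chi(\vec b) \iff {\mathcal M} \models \chi(\vec b)$. Applying this to $\neg\psi$ and the tuple $(a_1,\ldots,a_s)$ gives ${\mathcal M}\vert_A \models \neg\psi(a_1,\ldots,a_s)$, hence ${\mathcal M}\vert_A \not\models \phi$, hence ${\mathcal M}\vert_A \not\models T$; so the isomorphism type of ${\mathcal M}\vert_A$ lies in ${\rm Forbid}^T$, as required.

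There is no real obstacle here: the argument is standard first-order logic, and the only conceptual ingredient beyond the excerpt is preservation of quantifier-free truth under induced substructures, which is routine. If one wants a more economical ${\rm Forbid}^T$ (e.g.\ containing only minimal bad structures), one could instead take the isomorphism types of those finite ${\mathcal N} \not\models T$ every proper induced substructure of which is a model of $T$; the same two paragraphs then work, since any ${\mathcal M}\vert_A$ as above has a (finite) induced substructure that is minimally bad in this sense. For the statement as given, the simpler definition suffices.
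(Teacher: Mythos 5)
Your proof is correct, but it takes a different route from the paper's. You make the ``lazy'' semantic choice ${\rm Forbid}^T = \{$isomorphism types of finite non-models of $T\}$ and then verify the biconditional using two standard facts: universal sentences are preserved under passing to induced substructures (already noted in subsection \ref{strux}), and quantifier-free formulas are reflected from $\mathcal M$ to the induced substructure on a finite witness set (immediate here because the language is purely relational, so atomic formulas are absolute and the rest is induction on Boolean structure). The paper instead argues syntactically: it puts each axiom into a normal form, does a case analysis over the possible equality patterns on the quantified variables, and rewrites every axiom as $\forall x_1 \ldots \forall x_t\,(\chi \implies \Phi)$ where $\chi$ asserts distinctness and $\Phi$ is a disjunction of complete quantifier-free descriptions of a $t$-element structure; the forbidden types of size $t$ are then read off as the descriptions missing from $\Phi$. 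Your argument is shorter and proves the lemma as stated with less bookkeeping. What the paper's construction buys is explicitness: each forbidden type is tied to a specific axiom and has size bounded by that axiom's number of variables, so a finite set of forbidden types of size $\le m$ corresponds to a finite subtheory $T_0 \subseteq T$ --- exactly the refinement invoked in the remark following the removal theorem for models of $T$ in section \ref{withT}. With your definition that extra information is recoverable (there are only finitely many isomorphism types of each size over a finite relational language), but it is not handed to you directly. Your closing observation about restricting to minimal bad structures is a further, orthogonal economization that neither proof needs.
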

   Since the proof is easy and tedious, we have relegated it to the appendix.
It follows immediately that the limit of a convergent sequence of models of $T$ describes a random model of $T$. To be more precise we have :
\begin{theorem} 
Let $( {\mathcal N}_k )$ be a convergent sequence of finite models of $T$, and let $\mathfrak E$ be the limit as constructed in subsection \ref{limsec}. Then almost surely $\boldsymbol{\mathcal N}( {\mathfrak E}, m)$ is a model of $T$ for all $m$. 
\end{theorem}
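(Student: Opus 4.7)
The plan is to combine Lemma \ref{forbidlemma} with the measure-theoretic content of the proof of Theorem \ref{limittheorem} and a countable union bound. First I would note that, because each $\mathcal N_k$ is a model of $T$, Lemma \ref{forbidlemma} forbids any $\mathcal M \in {\rm Forbid}^T$ from appearing as an induced substructure of $\mathcal N_k$; hence $p(\mathcal M, \mathcal N_k) = 0$ for every $k$ and every $\mathcal M \in {\rm Forbid}^T$, and passing to the limit gives $\Phi_p(\mathcal M) = 0$ for each such $\mathcal M$. The key intermediate identity from the proof of Theorem \ref{limittheorem}, namely $\lambda(T_{\rm ind}(\mathcal M, \mathfrak E)) = \Phi_{t_{\rm ind}}(\mathcal M) = \Phi_p(\mathcal M)\, t_{\rm ind}(\mathcal M, \mathcal M)$, then yields $\lambda(T_{\rm ind}(\mathcal M, \mathfrak E)) = 0$ for every $\mathcal M \in {\rm Forbid}^T$.

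Next, for each $m \ge \Vert \mathcal M \Vert$ I would estimate the probability that $\boldsymbol{\mathcal N}(\mathfrak E, m)$ contains an induced copy of $\mathcal M$. Such a copy is witnessed by some injection $f \colon \vert \mathcal M \vert \to [m]$ that is an embedding, and for any fixed $f$ the event that $f$ is an embedding depends only on those coordinates $y_A$ of $\vec y$ with $A \subseteq f(\vert \mathcal M \vert)$; the marginal of $\vec y$ on these coordinates is uniform on $[0,1]^{r([\Vert \mathcal M \Vert], r_{\rm max})}$, and after the obvious relabelling the event is precisely that this marginal tuple lies in $T_{\rm ind}(\mathcal M, \mathfrak E)$. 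Hence the probability that $f$ is an embedding equals $\lambda(T_{\rm ind}(\mathcal M, \mathfrak E)) = 0$, and a finite union bound over the $m^{\underline{\Vert \mathcal M \Vert}}$ possible injections gives probability zero for an induced copy of $\mathcal M$ in $\boldsymbol{\mathcal N}(\mathfrak E, m)$.

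To conclude, since ${\rm Forbid}^T$ is a countable family of finite isomorphism types and $m$ ranges over $\mathbb N$, the collection of pairs $(m, \mathcal M)$ with $\mathcal M \in {\rm Forbid}^T$ and $\Vert \mathcal M \Vert \le m$ is countable, and a further union bound shows that almost surely no $\mathcal M \in {\rm Forbid}^T$ appears as an induced substructure of $\boldsymbol{\mathcal N}(\mathfrak E, m)$ for any $m$. Since each $\boldsymbol{\mathcal N}(\mathfrak E, m)$ is finite, Lemma \ref{forbidlemma} then delivers $\boldsymbol{\mathcal N}(\mathfrak E, m) \models T$ for every $m$ simultaneously on an event of probability one. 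The only mildly delicate step is the marginal identification in the second paragraph, but it is a direct consequence of the product (independent, uniform) sampling used in the construction of $\boldsymbol{\mathcal N}(\mathfrak E, m)$ from subsection \ref{limsec}, so I do not anticipate a genuine obstacle.
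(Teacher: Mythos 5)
Your proposal is correct and follows essentially the same route as the paper: show that $T_{\rm ind}({\mathcal M},{\mathfrak E})$ is null for each forbidden type ${\mathcal M}$ (via $\Phi_p({\mathcal M})=0$ and the measure identity from the proof of Theorem \ref{limittheorem}), deduce that almost surely no forbidden induced substructure occurs in any $\boldsymbol{\mathcal N}({\mathfrak E},m)$ by a countable union bound, and invoke Lemma \ref{forbidlemma}. The only difference is that you spell out the union bound over injections and the marginal identification, which the paper leaves implicit.
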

\begin{proof} For each type $\mathcal M$ in ${\rm Forbid}^T$, $p({\mathcal M}, {\mathcal N}_\infty) = 0$ and so $T_{\rm ind}( {\mathcal M}, {\mathfrak E} )$ is a null set. It follows that almost surely $p({\mathcal M}, \boldsymbol{\mathcal N}( {\mathfrak E}, m)) = 0$ for each ${\mathcal M}$ and $m$, so we are done by $\sigma$-additivity. 
\end{proof} 
   
 The integrals representing $\Phi_{p/t/t_{\rm ind}}( {\mathcal M} )$ in terms of the limit object $\mathfrak E$ sometimes become simpler when $\mathfrak E$ is a limit of models of $T$. Some examples appear in Section \ref{examples} of the Appendix.  

\begin{theorem} Let $T$ be a universal theory in the finite language $\mathcal L$.

 Let $F$ be a (possibly infinite) set of finite models of $T$. For every $\epsilon > 0$ there exist $\delta > 0$ and $m$ such that for all sufficiently large finite models $\mathcal N$ of $T$,  {\bf if} $p({\mathcal M}, {\mathcal N}) < \delta$ for all ${\mathcal M} \in F$ with $\Vert {\mathcal M} \Vert \le m$, {\bf then} there is  a model ${\mathcal N}'$ of $T$ such that $\vert {\mathcal N} \vert = \vert {\mathcal N}' \vert$,   $d( {\mathcal N}, {\mathcal N}') < \epsilon$ and $p( {\mathcal M}, {\mathcal N}') = 0$ for all ${\mathcal M} \in F$.
\end{theorem}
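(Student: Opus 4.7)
The plan is to reduce directly to the pure-relational Strong Removal Theorem \ref{removal} by enlarging the forbidden family using Lemma \ref{forbidlemma}. Let $F^+ = F \cup {\rm Forbid}^T$, where ${\rm Forbid}^T$ is the (possibly infinite) set of isomorphism types of finite structures whose absence as induced substructures characterises models of $T$. Apply Theorem \ref{removal} to the family $F^+$ and the given $\epsilon > 0$, obtaining threshold parameters $\delta > 0$ and $m$. These are the parameters we will use for our theorem.

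Now suppose $\mathcal N$ is a sufficiently large finite model of $T$ with $p(\mathcal M, \mathcal N) < \delta$ for every $\mathcal M \in F$ with $\Vert \mathcal M \Vert \le m$. I claim the hypothesis of Theorem \ref{removal}, applied to the enlarged family $F^+$, is automatic. Indeed, for $\mathcal M \in F$ this is exactly our assumption, while for $\mathcal M \in {\rm Forbid}^T$ we have $p(\mathcal M, \mathcal N) = 0 < \delta$: by Lemma \ref{forbidlemma}, since $\mathcal N \models T$, no induced substructure of $\mathcal N$ has isomorphism type in ${\rm Forbid}^T$, and on a finite structure the induced substructure density vanishes precisely when the corresponding induced copy is absent.

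Theorem \ref{removal} then produces a structure $\mathcal N'$ with $\vert \mathcal N' \vert = \vert \mathcal N \vert$, $d(\mathcal N, \mathcal N') < \epsilon$ and $p(\mathcal M, \mathcal N') = 0$ for every $\mathcal M \in F^+$. The conclusion for $\mathcal M \in F$ is exactly what we want; the conclusion for $\mathcal M \in {\rm Forbid}^T$ (combined with finiteness of $\mathcal N'$, and the convention that $p(\mathcal M, \mathcal N')=0$ whenever $\Vert \mathcal M \Vert > \Vert \mathcal N' \Vert$) says that no element of ${\rm Forbid}^T$ appears as an induced substructure of $\mathcal N'$. Invoking Lemma \ref{forbidlemma} in the other direction, $\mathcal N' \models T$, completing the argument.

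The proof is really a formal reduction, so there is no substantial obstacle once the two ingredients are in hand. The only point requiring mild care is ensuring that Theorem \ref{removal} allows infinite forbidden families (it does, as stated) so that we are permitted to take $F^+$ even when ${\rm Forbid}^T$ is infinite; and the small observation that $\mathcal N \models T$ automatically validates the $\delta$-hypothesis on ${\rm Forbid}^T$ at no extra cost, so the reduction carries the same $\delta$ and $m$ that Theorem \ref{removal} produces for $F^+$.
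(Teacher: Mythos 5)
Your proof is correct and is essentially identical to the paper's (one-line) argument: take $F^+ = F \cup {\rm Forbid}^T$ and apply Theorem \ref{removal}. The extra details you supply — that $\mathcal N \models T$ makes the $\delta$-hypothesis automatic on ${\rm Forbid}^T$, and that $p(\mathcal M, \mathcal N') = 0$ for all forbidden types forces $\mathcal N' \models T$ via Lemma \ref{forbidlemma} — are exactly the observations the paper leaves implicit.
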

\begin{proof} Let $F^+ = F \cup  {\rm Forbid}^T$ and apply Theorem \ref{removal} to $F^+$. 
\end{proof}

   Note that the proof gives a bit more: there exist $m$ and a finite subset $T_0$ of $T$ such that any large model of $T_0$ with the property that $p({\mathcal M}, {\mathcal N}) < \delta$ for all  ${\mathcal M} \in F$ with $\Vert {\mathcal M} \Vert \le m$ is within $\epsilon$ of some model of $T$.

\section{Final Remarks}
 We note that our Strong Removal Lemma for models of universal theories allows for the extension of property testing results to such structures. We have also successfully investigated the existence of limits, regularity and removal lemmas for general weighted structures. This work, which also uses the theory of measures on ultraproducts,
 will appear in a subsequent paper.  If $(G_n)$ is a sequence of graphs where the density of edges tends to zero (for example, a sequence of graphs of bounded degree) then the limit graphon is identically zero. There is a cultivated theory of limits for sequences of graphs of bounded degree, where the limit object (a {\em graphing}) is defined in a different way.  It is not clear whether this theory can be extended in a meaningful way to more general classes of structures. 
 
\appendix
\section{Some examples} \label{examples}
\subsection{Structures with one binary relation} 
Let $\mathcal{L}$ only contain one binary relation symbol $R$.
If $\mathcal M$ is an ${\mathcal L}$-structure, then $\Code(\mathcal{M}) = (\DHyp_{\{ \{1\}\{2\} \}}^{\mathcal{M}},  \DHyp_{ \{ \{1,2\} \} }^{\mathcal{M}}  )$, which amounts to saying that we  can view $\mathcal M$ as a digraph with loops and we code it by separately recording the set of directed edges between distinct points and the set of vertices
 with a loop.  
 An $\mathcal{L}$-limit $\mathfrak{F}$ is of the form $(F_{\{ \{1\}\{2\} \}},  F_{ \{ \{1,2\} \} }  )$ where $F_{\{ \{1\}\{2\} \}}$ is a measurable subset of $[0,1]^{r[2]}$ and $F_{\{ \{1, 2\} \}}$ is a measurable subset of $[0,1]^{r[1]}$. 

Let $\mathcal{M}$ be a binary relation with $\Vert \mathcal{M} \Vert = n$, where we  assume for convenience that $\vert \mathcal{M} \vert = [n]$.  As in subsection \ref{limsec} we can compute $\Phi_t(\mathcal{M})$ from $\mathfrak F$ by finding the measure of a certain subset of $[0,1]^{r([n], 2)}$. To give a formula for the measure of this set in a palatable form, let $f_p$ be the characteristic function of the measurable set $F_p$ in the limit $\mathcal{F}$. Then $\Phi_t(\mathcal{M})$ is the integral of  a certain product $F(\vec x) G(\vec x)$ over $[0,1]^{r([n], 2)}$: 

\begin{itemize}

\item $F(\vec x)$ is the product of terms of the form $f_{\{ \{1\}\{2\} \}}(x_{\{i\}}, x_{\{j\}},x_{\{i,j\}})$, taken over all pairs  $(i, j)$ where $i \neq j$ and there is  an edge from $i$ to $j$.

\item $G(\vec x)$ is the product  of terms of the form $f_{ \{ \{1,2\} \}}(x_{\{i\}})$, taken over all $i$ such that there is a loop at $i$. 

\end{itemize}   
The formula for $\Phi_{t_{\rm ind}}(\mathcal{M})$ is very similar in form, but the functions $F(\vec x)$ and $G(\vec x)$ are more complex in this case:
\begin{itemize}

\item $F(\vec x)$ is the product over pairs $(i, j)$ with $i \neq j$ of terms which have the form  $f_{\{ \{1\}\{2\} \}}(x_{\{i\}}, x_{\{j\}},x_{\{i,j\}})$ if there is an edge from $i$ to $j$, and the form $(1 - f_{\{ \{1\}\{2\} \}}(x_{\{i\}}, x_{\{j\}},x_{\{i,j\}}))$ if there is no edge from $i$ to $j$.  

\item  $G(\vec x)$ is the product over $i$ of terms which have the form $f_{ \{ \{1,2\} \}}(x_{\{i\}})$ if there is a loop at $i$, and the form $(1 - f_{ \{ \{1,2\} \}}(x_{\{i\}}))$ if there is no loop at $i$.

\end{itemize}

\subsection{Digraphs} 
   From our point of view a digraph is just a binary relation $R$ in which $v R v$ is false for all $v$. In syntactic terms digraphs are ${\mathcal L}$-structures which are models of the theory $T_{\rm digraphs} = \{ \forall v \; \neg v R v \}$, while in the language of forbidden substructures they are structures that forbid the substructure which has a single vertex with a loop.   In terms of our coding, digraphs are ${\mathcal L}$-structures $\mathcal M$ such that $\DHyp_{ \{ \{1,2\} \}}^{\mathcal{M}} = \emptyset$. It follows that a limit of digraphs is a limit with   $F_{ \{ \{1,2\} \} } = \emptyset$. In this case, if $\mathcal M$ is an ${\mathcal L}$-structure with some loops then (as we would expect) the integral formula gives the value zero.  If $\mathcal{M}$ is  a digraph, then the integral formula for $\Phi_t(\mathcal{M})$ only includes the first product $F(\vec x)$. This integral can be simplified in a suggestive way, by collecting all the terms which involve each pair of variables. Define four functions $F_i(x, y)$  by setting:

\begin{itemize}

\item   $F_0(x, y) = \lambda ( \{ z : \mbox{ $(x, y, z) \notin F_{\{ \{1\}\{2\} \}}$ and $(y, x, z) \notin F_{\{ \{1\}\{2\} \}}$} \} )$,

\item   $F_1(x, y) = \lambda ( \{ z : \mbox{ $(x, y, z) \in F_{\{ \{1\}\{2\} \}}$ and $(y, x, z) \notin F_{\{ \{1\}\{2\} \}}$}  \} )$,

\item   $F_2(x, y) = \lambda ( \{ z : \mbox{ $(x, y, z) \notin F_{\{ \{1\}\{2\} \}}$ and $(y, x, z) \in F_{\{ \{1\}\{2\} \}}$} \} )$,

\item   $F_3(x, y) = \lambda ( \{ z : \mbox{ $(x, y, z) \in F_{\{ \{1\}\{2\} \}}$ and $(y, x, z) \in F_{\{ \{1\}\{2\} \}}$} \} )$.

\end{itemize} 
 It is immediate from the definitions that $F_0 + F_1 + F_2 + F_3 = 1$, $F_0$ and $F_3$ are symmetric, and $F_1(x, y) = F_2(y, x)$. The integral formula for $\Phi_t(\mathcal{M})$ can now be written as the integral over $[0,1]^2$ of a product (taken over all pairs $(i, j) $ with $i < j$) of terms of the following form:

\begin{itemize}

\item $(F_1 + F_3)(x_{\{i\}}, x_{\{j\}})$ if there is only an edge from $i$ to $j$,   

\item $(F_2 + F_3)(x_{\{i\}}, x_{\{j\}})$ if there is only an edge from $j$ to $i$,   

\item $F_3(x_{\{i\}}, x_{\{j\}})$ if there are edges in both directions between $i$ and $j$.   

\end{itemize}
 This calculation has recovered a version of the notion of ``digraph limit'' due to Offner and Pikhurko \cite{Offner}. 

The integral formula for $\Phi_{t_{\rm ind}}(\mathcal{M})$ can be similarly written as the integral over $[0,1]^2$ of a product (taken over all pairs $\{ i, j \}$ with $i < j$) of terms of the following form:

\begin{itemize}

\item $F_0(x_{\{i\}}, x_{\{j\}})$ if there is no edge in either direction between $i$ and $j$,

\item $F_1(x_{\{i\}}, x_{\{j\}})$ if there is only an edge from $i$ to $j$,   

\item $F_2(x_{\{i\}}, x_{\{j\}})$ if there is only an edge from $j$ to $i$,   

\item $F_3(x_{\{i\}}, x_{\{j\}})$ if there are edges in both directions between $i$ and $j$.   

\end{itemize}
\subsection{Graphs and hypergraphs} 

A graph is just a symmetric digraph, or a model of $T_{\rm graphs} = \{ \forall v \; \neg v R v, \forall v \; \forall w \; (v R w \implies w R v)  \}$. In the language of forbidden substructures, graphs are structures which forbid the single vertex with a loop, and a pair of vertices with an edge going only one way. It is easy to see that (in our formulation) the limit of a sequence of graphs will have $F_{ \{ \{1,2\} \} } = \emptyset$, while $F_{\{ \{1\}\{2\} \}}$ is (without loss of generality) a set of triples $(x,y,z)$ which is symmetric in the sense that $(x, y, z) \in F_{\{ \{1\}\{2\} \}} \iff (y, x, z) \in F_{\{ \{1\}\{2\} \}}$.
 
The integral formula for $\Phi_t(M)$ now simplifies further: retaining our notation for digraph limits we have $F_1 = F_2 = 0$, $F_0 + F_3 = 1$ and it is easy to see that we recovered the classical graph limit called graphon in the form of the symmetric measurable function $F_3$. 

\subsection{Posets}

A $\emph{(strict) partial order}$ on $X$ is an irreflexive, transitive binary relation. The theory of partially ordered sets, or \emph{posets} is $T_{\rm posets} = \{\forall v \; \neg vRv, \forall u \; \forall v \; \forall w \; (uRv \wedge vRw \implies uRw) \}$. 
${\rm Forbid}^T = \{ \mathcal{M}_1, \mathcal{M}_2, \mathcal{M}_3, 
\mathcal{M}_4 \}$ where 

\begin{itemize}

\item $\vert \mathcal{M}_1 \vert = [1]$ with $R^{\mathcal{M}_1} = \{ (1,1) \}$,

\item $\vert \mathcal{M}_2 \vert = [2]$ with $R^{\mathcal{M}_2} = \{ (1,2), (2,1) \}$,

\item $\vert \mathcal{M}_3 \vert = [3]$ with $R^{\mathcal{M}_3} = \{(1,2), (2,3)\}$, 

\item $\vert \mathcal{M}_4 \vert = [3]$ with $R^{\mathcal{M}_4} = \{(1,2), (2,3), (3,1)\}$.

\end{itemize}

An $\mathcal{L}$-limit $\mathcal{F}$ has the property that $\Phi_{t_{\text{ind}}}(\mathcal{M}_i) = 0$ for $i \in [4]$. 
A poset is also a digraph. Using the simplified notation for digraph limits, we can say that a poset limit has the additional properties that $F_3=0$ and if $F_1(x,y) > 0$ and $F_1(y,z) > 0$, then $F_1(x,z)=1$.
\medskip

Janson \cite{Janson} defined a notion of poset limit. Such a limit consists of a partial ordering $\prec$ on $[0, 1]$ and  a measurable function $W : [0,1]^2 \to [0,1]$ with the properties that 

\begin{itemize}

\item $W(x,x) = 0$,

\item $W(x,y) > 0 \implies W(y,x) = 0$,

\item $W(x,y) > 0$ and $W(y,z) > 0 \implies W(x,z) = 1$.

\end{itemize}

 The corresponding construction of a random partial ordering of $[n]$ involves randomly choosing $x_i$ and $z_{i, j}$ in $[0,1]$, and then putting $i$ below $j$ if and only if $x_i \prec x_j$ and $z_{i, j} < W(x_i, x_j)$. Given a poset limit in Janson's sense, we can easily retrieve an equivalent poset limit in our sense. 
In the other direction, while we can show that a Janson limit $(\prec, W)$ equivalent to any poset limit $\mathfrak F$ in our sense exists, we do not have an easy algebraic way to find such a Janson limit.

\section{Proof of Lemma \ref{forbidlemma}} \label{forbid}

Let the variable symbols of $\mathcal L$ be listed as $x_1, x_2, \ldots$. For each $m > 0$ let $X^1_m$ be the set of formulae of the form $x_i = x_j$ for $1 \le i \le j \le m$, and let $Y^1_m$ be the set of conjunctions which contain exactly one of $\psi$, $\neg \psi$ for each $\psi \in X^1_m$ and are logically consistent (we note that such conjunctions correspond in the obvious way to partitions of the set $[m]$). 
Let $X^2_m$ be the set of formulae of the form $R_i(z_1, \ldots, z_{n_i})$ where $z_j \in \{x_1, \ldots, x_m \}$, and let $Y^2_m$ be the set of conjunctions which contain exactly one of $\psi$, $\neg \psi$ for each $\psi \in X^2_m.$ 
  Intuitively formulae in $Y^1_m$ are complete descriptions of the equality relation on $\{ x_1, \ldots, x_m \}$, while formulae in $Y^2_m$ are complete descriptions of
the other relations in the language on the same set.
If $\phi$ is a quantifier-free formula which only mentions variables among $x_1, \ldots, x_m$, then by elementary propositional logic it is equivalent to some disjunction of formulae of the form $\psi^1 \wedge \psi^2$ where $\psi^i \in Y^i_m$. Call this the ``normal form'' for $\phi$.

Each axiom $\Psi$ of $T$ is equivalent to a sentence of the form $\forall x_1 \ldots \forall x_m \phi$ where $\phi$ is quantifier-free and is in normal form,  say $\phi = \bigvee_i (\psi^1_i \wedge \psi^2_i)$. Clearly $\Psi$ is equivalent to the set of formulae of the form $\forall x_1 \ldots \forall x_m (\psi \implies \phi)$ with $\psi \in Y^1_m$, where breaking up the formula $\forall x_1 \ldots \forall x_m \phi$ in this way corresponds to a case analysis depending on the equality relation on $\{ x_1, \ldots, x_m \}$. 

For each $\psi \in Y^1_m$,  it is easy to see that $\psi \implies \phi$ is equivalent to $\psi \implies \bigvee_{i, \psi^1_i = \psi} \psi^2_i$. Finally, if $\psi$ corresponds to a partition into $t$ classes, then $\forall x_1 \ldots \forall x_m \psi \rightarrow \phi$ is equivalent (replacing variables in the $j^{\rm th}$ class by $x_j$ and eliminating redundancies) to a formula of the form  $\forall x_1 \ldots \forall x_t (\chi \implies \Phi)$, where $\chi = \bigvee_{1 \le i < j \le t} x_i \neq x_j$ and $\Phi$ is a disjunction of formulae in $Y^2_t$. The point of all this is that axioms of the form $\forall x_1 \ldots \forall x_t (\chi \implies \Phi)$ have a very natural combinatorial interpretation: such an axiom is true in $\mathcal M$ if and only if for every $A \subseteq \vert {\mathcal M} \vert$ with $\vert A \vert = t$,  the isomorphism type of  ${\mathcal M} \vert_A$ is among those specified by the elements of $Y^2_t$ appearing in $\Phi$. In keeping with the usual usage in graph theory, we can view axioms of this kind as saying that certain induced substructures are {\em forbidden}: what we have shown is that for any universal theory $T$ there is a sequence $\langle {\rm Forbid}^T_t : t > 0 \rangle$ such that ${\rm Forbid}^T_t$ is a set of isomorphism types for $\mathcal L$-structures of size $t$, and the models of $T$ are exactly those $\mathcal M$ such that the isomorphism type of ${\mathcal M} \vert_A$ is not in  ${\rm Forbid}^T_t$ for every $t$ and every $A \subseteq \vert {\mathcal M} \vert$ with $\vert A \vert = t$.

\bibliographystyle{plain}

\bibliography{library}

\end{document}